\documentclass[12pt,a4paper]{article}
\usepackage[utf8]{inputenc}
\usepackage{tikz}
\usetikzlibrary{arrows,automata}
\tikzset{every state/.style={minimum size=0pt}}
\usepackage{smartdiagram}  
\usepackage{algorithm}
\usepackage{algcompatible}
\usepackage{graphicx}
\usepackage{xcolor}
\usepackage[algo2e,linesnumbered,ruled,vlined]{algorithm2e}
\usepackage[english]{babel}
\usepackage{subfig}
\usepackage{animate}
\usepackage{authblk}
\usepackage[margin=1in]{geometry}
\usepackage[T1]{fontenc}
\usepackage{amssymb, amsthm}
\usepackage{amsmath}
\usepackage{mathtools}
\usepackage{relsize}
\usepackage{todonotes}
\usepackage{hyperref}

\newtheorem{theorem}{Theorem}
\newtheorem{lemma}[theorem]{Lemma}

\newtheorem{observation}[theorem]{Observation}
\newtheorem{proposition}[theorem]{Proposition}

\newtheorem{definition}[theorem]{Definition}
\newtheorem{claim}[theorem]{Claim}

\newtheorem{remark}[theorem]{Remark}

\newcommand{\defproblem}[3]{
  \vspace{3mm}
\noindent\fbox{
  \begin{minipage}{.97\textwidth}
  \begin{tabular*}{\textwidth}{@{\extracolsep{\fill}}lr} \textsc{#1} \\ \end{tabular*}
  {\bf{Input:}} #2  \\
  {\bf{Question:}} #3
  \end{minipage}
  }
  \vspace{2mm}
}

\begin{document}
\title{\textbf{Computational and Combinatorial Results on Conflict-free Choosability 
}}
\author{Shiwali Gupta and Rogers Mathew}
\affil{Department of Computer Science and Engineering, Indian Institute of Technology Hyderabad. 
\authorcr\{cs21resch11002, rogers\}@iith.ac.in}
\date{}
\maketitle
\textbf{Keywords:}{ conflict-free coloring, list conflict-free coloring, choice number, claw number, computational complexity, hardness results, conflict-free choosability, maximum degree, minimum degree, planar graph.}

\begin{abstract}
The \emph{conflict-free closed neighborhood (CFCN$^*$) chromatic number} of a graph $G = (V,E)$ is the smallest positive integer $k$ for which there exists a coloring of a subset of vertices using $k$ colors such that, for every vertex in $V$, there exists a color that appears exactly once in its closed neighborhood. The \emph{conflict-free open neighborhood (CFON$^*$) chromatic number} is defined analogously.  In this paper, we study `list variants' of the above-mentioned coloring parameters. The \emph{conflict-free closed neighborhood (CFCN$^*$) choice number} of a graph $G = (V,E)$ is the smallest positive integer $k$ such that for every assignment of lists of size $k$ to its vertices, there exists a coloring of a subset of vertices, say $V'$, in which (i) every vertex in $V'$ receives a color from its list, and (ii) for every vertex in $V$ there exists some color that appears exactly once in its closed neighborhood. 
The \emph{conflict-free open neighborhood (CFON$^*$) choice number} is defined analogously. 

D\k{e}bski and Przyby\l o  [Journal of Graph Theory, 2022] showed that for any graph $G$ with maximum degree $\Delta$, the CFCN$^*$ chromatic number of its line graph is $O(\ln \Delta)$. This result was later extended to claw-free graphs by Bhyravarapu et al. [Journal of Graph Theory, 2025], who proved that every $K_{1,k}$-free graph $G$ admits a CFCN$^*$ coloring using $O(k\ln \Delta)$ colors. 
In this paper, we generalize this result to the list setting and show that every $K_{1,k}$-free graph $G$ has a CFCN$^*$ choice number of $O(k\ln \Delta)$.
Further, we answer some questions concerning the hardness of computing CFCN$^*$/CFON$^*$ choice numbers posed by Gupta and Mathew [SOFSEM, 2026]; in particular, we show that it is NP-hard to determine whether the CFCN$^*$/CFON$^*$ choice number a graph is equal to $k$, for $k=1,2$.
\end{abstract}

\section{Introduction}
We begin by defining the notion of conflict-free (CF) choice number of hypergraphs and the two variants of this parameter on graphs, namely conflict-free open neighborhood (CFON) choice number and conflict-free closed neighborhood (CFCN) choice number. Let $\mathcal{H}=(V,\mathcal{E})$ be a hypergraph.
\begin{definition}[$k$-assignment]
 Let $\mathcal{L}=\{L_v : v\in V\}$, where each $L_v$ is a list of colors.  
We say that $\mathcal{L}$ is a \emph{$k$-assignment} for $\mathcal{H}$ if $|L_v|=k$ for every $v\in V$.
\end{definition}

\begin{definition}[$k$-CF$^*$-choosable]
Let $\mathcal{L}=\{L_v : v\in V\}$ be a list assignment.  
We say that $\mathcal{H}$ admits an \emph{$\mathcal{L}$-CF$^*$-coloring} if there exists a coloring 
$f : V' \rightarrow \bigcup_{v\in V'} L_v$, for some $V'\subseteq V$, such that $f(v)\in L_v$ for all $v\in V'$, and every hyperedge $E\in\mathcal{E}$ contains a vertex whose color is unique within $E$.  
The hypergraph $\mathcal{H}$ is called \emph{$k$-CF$^*$-choosable} if it admits an $\mathcal{L}$-CF$^*$-coloring for every $k$-assignment $\mathcal{L}$.
\end{definition}

\begin{definition}[CF$^*$ choice number]
\label{def:CF_list}
The minimum positive integer $k$ such that $\mathcal{H}$ is $k$-CF$^*$-choosable is called the \emph{CF$^*$-choice number} of $\mathcal{H}$. It is denoted by $ch^*_{CF}(\mathcal{H})$.
\end{definition}

Let $G$ be a finite, simple, undirected graph. Let $V(G)$ denote its vertex set and $E(G)$ denote its edge set. For a vertex $v \in V(G)$, the \emph{open neighborhood} of $v$ in $G$ is the set of all vertices adjacent to $v$, and is denoted by $N_G(v)$. The \emph{closed neighborhood} of $v$ is the set $\{v\} \cup N_G(v)$, and is denoted by $N_G[v]$. 

\begin{definition}[CFON$^*$ choice number]
\label{defn_list_CFON}
Given a list assignment $\mathcal{L} = \{ L_v~:~v \in V(G)\}$ for the graph $G$, we say $G$ is \emph{$\mathcal{L}$-CFON$^*$-colorable} if there exists a coloring  $f: V' \rightarrow \bigcup_{v \in V'}L_v$, where $V' \subseteq V(G)$, such that $f(v) \in L_v$ $\forall v \in V'$,  and every vertex $v$ sees a unique color in its open neighborhood under $f$. We say that $G$ is $k$-CFON$^*$-choosable if for every $k$-assignment $\mathcal{L}$, $G$ is $\mathcal{L}$-CFON$^*$-colorable. The minimum $k$ for which $G$ is $k$-CFON$^*$-choosable is called the \emph{CFON$^*$} \emph{ choice number} of $G$. This is denoted by $ch^*_{ON}(G)$. 
\end{definition}

Analogously, one can define the CFCN$^*$ choice number of a graph $G$ which will be denoted by $ch^*_{CN}(G)$.

Let us define a special list assignment $\mathcal{L}^{[k]}=\{L_v : v\in V\}$, where $L_v = \{1, 2, \ldots , k\}$, for every $v \in V$. That is, under this special list assignment, every vertex is assigned the same list which is $\{1, 2, \ldots , k\}$. We say $\mathcal{H}$ is \emph{$k$-CF$^*$-colorable} if $\mathcal{H}$ admits an $\mathcal{L}^{[k]}$-CF$^*$-coloring. The minimum $k$ for which $\mathcal{H}$ is $k$-CF$^*$-colorable is called the \emph{$CF^*$-chromatic number} of $\mathcal{H}$, denoted by $\chi_{CF}^*(\mathcal{H})$. 
We say $G$ is \emph{$k$-CFON$^*$-colorable} if $G$ is \emph{$\mathcal{L}^{[k]}$-CFON$^*$-colorable}. The minimum $k$ for which $G$ is $k$-CFON$^*$-colorable is called the \emph{CFON$^*$} \emph{chromatic number} of $G$. This is denoted by $\chi_{ON}^*(G)$. Analgously, one can define \emph{CFCN$^*$} \emph{chromatic number} of $G$ which is denoted by $\chi_{CN}^*(G)$. 

\begin{remark}
In the definitions above, we did not insist on coloring all the vertices of the hypergraph (or graph) under consideration. We colored only a subset $V'$ of the set of vertices thus leading to a partial coloring.  The star symbol as a superscript  in the parameters $ch^*_{CF}(\mathcal{H})$, $ch^*_{ON}(G)$, $ch^*_{CN}(G)$, $\chi_{CF}^*(\mathcal{H})$, $\chi^*_{ON}(G)$, and $\chi^*_{CN}(G)$ indicate the underlying partial nature of coloring. That is, we do not insist on coloring all the vertices. However, if we insist on coloring all the vertices, then the corresponding  parameters are denoted $ch_{CF}(\mathcal{H})$, $ch_{ON}(G)$, $ch_{CN}(G)$, $\chi_{CF}(\mathcal{H})$, $\chi_{ON}(G)$, and $\chi_{CN}(G)$, respectively. 
\end{remark}
\begin{remark}
Given a graph $G$, let (i) $\mathcal{H}_{(G)} = (V, \mathcal{E})$, where $V = V(G)$ and $\mathcal{E} = \{N_G(v)~:~v \in V(G)\}$, and (ii) $\mathcal{H}_{[G]} = (V', \mathcal{E}')$, where $V' = V(G)$ and $\mathcal{E}' = \{N_G[v]~:~v \in V(G)\}$. A CFON$^*$ (resp. CFCN$^*$) coloring of $G$ is a CF$^*$ coloring of the hypergraph $\mathcal{H}_{(G)}$ (resp., $\mathcal{H}_{[G]}$). The converse is also true.       
\end{remark}

Let $\mathcal{H}$ (resp. $G$) be a hypergraph (resp., graph) on $n$ vertices. The following propositions connect the various conflict-free parameters with each other.  Proposition \ref{prop:partial vs full} establishes the relationship between partial and full conflict-free coloring. Using this we can extend all the bounds established in this paper to the full coloring setting. 

\begin{proposition}
\textup{\cite{gupta2026bounds}}
\label{prop:partial vs full}
\textnormal{
(i) $\chi_{CF}(\mathcal{H}) \le \chi^*_{CF}(\mathcal{H}) + 1$, 
(ii) $ch_{CF}(\mathcal{H}) = O\!\left(ch_{CF}^*(\mathcal{H}) + \ln n\right)$.}
\end{proposition}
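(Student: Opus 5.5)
For part (i), take an optimal partial CF$^*$-coloring $f : V' \to \{1,\dots,k\}$ with $k = \chi^*_{CF}(\mathcal{H})$. Extend it to a full coloring by giving every vertex of $V \setminus V'$ a fresh color $k+1$. Consider any hyperedge $E$. Under $f$ it contains a vertex $v$ whose color $c \in \{1,\dots,k\}$ appears exactly once in $E$. The new color $k+1$ differs from $c$, so $c$ still appears exactly once in $E$ after the extension. Hence the extension is a conflict-free coloring with $k+1$ colors.

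For part (ii), the idea is the same, but a single fresh color is not available, since lists are arbitrary. Let $k = ch^*_{CF}(\mathcal{H})$ and fix a $K$-assignment $\mathcal{L}$ with $K = k + t$, where $t = \lceil C \ln n \rceil$ for a suitable constant $C$. The plan has two steps.

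\emph{Step 1: split the palette at random.} Partition the universe of colors randomly: each color independently goes to a ``reserved'' class $R$ with probability $p$ and to a ``main'' class otherwise. We want, for every $v$, at least $k$ main colors in $L_v$. Then the restricted lists $L_v \setminus R$, truncated to size $k$, form a $k$-assignment, which admits a partial CF$^*$-coloring $f$ on some $V'$ using only main colors. We also want the reserved sets $L_v \cap R$ to be nonempty and to allow us to color $V \setminus V'$ with reserved colors only. Since main and reserved colors are disjoint, coloring $V \setminus V'$ from $R$ never destroys the unique main color in any hyperedge, so the result is a full $\mathcal{L}$-coloring that is conflict-free.

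\emph{Step 2: choose $p$ and apply concentration.} It suffices that every $v$ has $|L_v \cap R| \ge 1$ and $|L_v \setminus R| \ge k$. By Chernoff bounds and a union bound over the $n$ vertices, both hold with positive probability when $p$ is a constant (say $1/2$) and $K = 2k + C\ln n$. This gives $ch_{CF}(\mathcal{H}) = O(k + \ln n)$.

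The main obstacle is the choice of $p$ and $K$ in Step 2. If $k$ is small relative to $\ln n$, the $C\ln n$ term dominates and ensures both events. If $k$ is large, Chernoff concentration on $|L_v \setminus R|$ with $p = 1/2$ and $K = 2k + C\ln n$ gives failure probability $e^{-\Omega(K)} < 1/(2n)$ for each event. A union bound then completes the argument.
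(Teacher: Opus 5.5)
Part (i) is correct. For part (ii) your strategy is sound: split the color universe at random into a main palette, used for a partial CF$^*$-coloring from the truncated $k$-lists, and a disjoint reserved palette, used to fill in $V\setminus V'$. Since reserved colors never coincide with main colors, every unique main color stays unique.

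\textbf{The gap is the concentration claim in Step 2.} With $p=1/2$ and $K=2k+C\ln n$, the number $X_v=|L_v\setminus R|$ of main colors in $L_v$ is $\mathrm{Bin}(K,1/2)$ with mean $k+\frac{C}{2}\ln n$. So the threshold $k$ sits only $\frac{C}{2}\ln n$ below the mean, while the standard deviation is $\sqrt{K}/2$. Hoeffding then gives only $\Pr[X_v<k]\le \exp(-\Theta(\ln^2 n/K))$, not $e^{-\Omega(K)}$. When $k\gg \ln^2 n$ this probability in fact tends to $1/2$ by the central limit theorem. The union bound over the $n$ vertices therefore fails exactly in the large-$k$ regime you claim to handle. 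Your initial choice $K=k+t$ is worse still with constant $p$: the expected number of main colors, $(1-p)(k+t)$, falls below $k$ as soon as $k>\frac{1-p}{p}\,t$.

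\textbf{Either of two routine repairs works.}
\begin{itemize}
\item Keep $p=1/2$ but enlarge the coefficient of $k$, say $K=4k+\lceil C\ln n\rceil$. Then $k\le K/4$ is half the mean of $X_v$, so $\Pr[X_v<k]\le e^{-K/16}$ and $\Pr[L_v\cap R=\emptyset]=2^{-K}$. Both are at most $n^{-2}$ once $C\ge 32$.
\item Keep your original $K=k+\lceil C\ln n\rceil$ but make the reserved palette sparse, $p=2\ln n/K$. Then $\Pr[L_v\cap R=\emptyset]\le e^{-pK}=n^{-2}$. Also $Y_v=|L_v\cap R|$ has mean $2\ln n$, so the Chernoff bound $\Pr[Y_v\ge s]\le (2e\ln n/s)^s$ for $s>2\ln n$ gives $\Pr[X_v<k]=\Pr[Y_v>\lceil C\ln n\rceil]\le (2e/C)^{C\ln n}\le n^{-2}$ for $C=4e$.
\end{itemize}
In both cases the union bound over the $2n$ bad events succeeds (for $n\ge 3$). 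The second version even yields the sharper additive form $ch_{CF}(\mathcal{H})\le ch^*_{CF}(\mathcal{H})+O(\ln n)$.
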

The following proposition connects the conflict-free notions in the classical and list settings.

\begin{proposition}
\textup{\cite{cheilaris2011potential}} 
\label{prop : classical vs full}
$\chi^*_\mathrm{CF}(\mathcal{H}) \le ch^*_\mathrm{CF}(\mathcal{H}) \le  \chi^*_\mathrm{CF}(\mathcal{H}) \cdot \ln n+ 1$.
\end{proposition}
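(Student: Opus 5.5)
The plan is to prove the two inequalities separately. The lower bound $\chi^*_{CF}(\mathcal{H}) \le ch^*_{CF}(\mathcal{H})$ is immediate. If $\mathcal{H}$ is $k$-CF$^*$-choosable, then it admits an $\mathcal{L}$-CF$^*$-coloring for every $k$-assignment $\mathcal{L}$. In particular it does so for the special assignment $\mathcal{L}^{[k]}$, which means exactly that $\mathcal{H}$ is $k$-CF$^*$-colorable.

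For the upper bound, I would follow the standard random-partition argument used to bound the choice number by the chromatic number (as in Alon's bound for ordinary list coloring). Let $t=\chi^*_{CF}(\mathcal{H})$ and fix an optimal partial coloring $c : V'' \to \{1,\dots,t\}$ that is CF$^*$ for $\mathcal{H}$. Set $k = \lceil t\ln n\rceil + 1$, or whatever integer $\le t\ln n+1$ the calculation allows, and let $\mathcal{L}$ be any $k$-assignment. Let $C=\bigcup_v L_v$. Assign each color $x\in C$ independently and uniformly at random a class label $g(x)\in\{1,\dots,t\}$.

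Call a vertex $v\in V''$ \emph{good} if $L_v$ contains some color $x$ with $g(x)=c(v)$. For each good vertex, pick one such $x$ and set $f(v)=x$; leave all other vertices uncolored. The key observation is the following. Suppose every vertex of $V''$ is good, so that $f$ is defined on all of $V''$. Then $g\circ f = c$ on $V''$. Hence if a hyperedge $E$ has a vertex $u$ whose $c$-color is unique in $E$, then $f(u)$ is also unique in $E$: any $w\in E\cap V''$ with $f(w)=f(u)$ would satisfy $c(w)=g(f(w))=g(f(u))=c(u)$, forcing $w=u$. So $f$ is an $\mathcal{L}$-CF$^*$-coloring. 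Note that this step uses that we colour exactly the set $V''$, since uncolored vertices never destroy uniqueness.

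It remains to show that with positive probability all vertices of $V''$ are good. This is the only computational step and the place where the constant must be tuned. For a fixed $v$, the $k$ colors in $L_v$ independently miss class $c(v)$, so
\[
\Pr[v \text{ is bad}] = (1-1/t)^{k} < e^{-k/t}.
\]
A union bound over at most $n$ vertices gives a failure probability below $n e^{-k/t}\le 1$ once $k\ge t\ln n$. Strictness comes either from the strict inequality $(1-1/t)^k<e^{-k/t}$ when $t\ge 2$, or trivially when $t=1$, since then every vertex is good. Hence $k=\lceil t\ln n\rceil \le t\ln n+1$ suffices, giving $ch^*_{CF}(\mathcal{H})\le \chi^*_{CF}(\mathcal{H})\ln n+1$.

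The main subtlety I expect is bookkeeping rather than difficulty. One must ensure that the bound holds with the exact additive $+1$ under ceiling rounding, and handle the degenerate cases $t=1$ and $n=1$ separately.
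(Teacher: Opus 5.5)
Your proof is correct and is essentially the standard random-partition argument from the cited source (the paper itself only cites the result and gives no proof). That argument randomly splits the union of the lists into $\chi^*_{CF}(\mathcal{H})$ classes, uses a union bound to ensure every colored vertex $v$ of an optimal coloring $c$ has a list color in class $c(v)$, and then picks that color, so uniqueness transfers because the classes are disjoint. Your choice to color exactly the support $V''$ of the optimal partial coloring is precisely the adaptation the starred parameters need, and your handling of the ceiling and of the cases $t=1$ and $n=1$ is fine.
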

Proposition \ref{prop:CN ON} below relates the CFON$^*$ notion with the CFCN$^*$ notion.

\begin{proposition}
\label{prop:CN ON}
\textnormal{ 
    (i) \cite{pach2009conflict} $\chi^*_{CN}(G) \le 2\,\chi^*_{ON}(G)$,
    (ii) \cite{gupta2026bounds} $ch^*_{CN}(G) \le 2\,ch^*_{ON}(G)\cdot \ln n + 1$.}
\end{proposition}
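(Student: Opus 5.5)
Both parts of Proposition~\ref{prop:CN ON} follow the same template. Starting from a conflict-free open neighborhood coloring, we build a closed neighborhood coloring from two copies of it, made distinguishable by the extra factor of $2$.

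For part (i) the plan is as follows.
\begin{enumerate}
\item Take an optimal CFON$^*$ coloring $f$ of $G$ with colors $\{1,\dots,k\}$, where $k=\chi^*_{ON}(G)$. Every vertex $v$ has a color appearing exactly once in $N_G(v)$.
\item Let $A$ be the set of vertices $v$ for which $f$ already works in $N_G[v]$, and let $B=V\setminus A$. A vertex $v$ lies in $B$ only if $v$ is colored and $f(v)$ equals every color that is unique in $N_G(v)$; in particular $v$ itself is colored.
\item Build $g$ by keeping $f$ and, for each $v\in B$, replacing $f(v)$ by the shadow color $f(v)+k$. Now $v$ sees the unique color $f(v)+k$ at itself, provided no neighbour of $v$ also carries $f(v)+k$. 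A neighbour $w\in B$ with $f(w)=f(v)$ would give two occurrences of $f(v)$ in $N_G(v)$, which must be ruled out.
\item The expected subtlety is that recoloring can destroy the uniqueness enjoyed by vertices of $A$. To handle this, we use the cleaner standard argument of Pach and Tardos, taking the coloring $g(v)=(f(v),\text{tag})$. The key observation is that, for $v\in B$, the unique color $c$ of $N_G(v)$ is carried by a neighbour $u$ with $f(u)=c=f(v)$. Recoloring only one endpoint of each such pair then suffices: recolor $v$ to $c+k$, so that $u$ remains the unique $c$ in $N_G[v]$ while $v$ now has the fresh color $c+k$.
\item To ensure that every affected vertex $w$ still sees a unique color, we choose the recolored set to be a maximal independent subset of $B$ in the graph of such pairs. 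We then verify case by case that the shadow copy $c+k$ appears at most once in any closed neighbourhood where it is needed.
\end{enumerate}
This verification is the main obstacle, and since the result is cited we would mirror the published argument.

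For part (ii) the plan is as follows.
\begin{enumerate}
\item Set $k=ch^*_{ON}(G)$. Given lists of size $2k\ln n+1$, we cannot simply split them into two halves of size $k$ and mimic (i), because the two halves would not be disjoint across vertices.
\item Instead, use the route through Proposition~\ref{prop : classical vs full}. First, $\chi^*_{CN}(G)\le 2\chi^*_{ON}(G)\le 2\,ch^*_{ON}(G)$ by part (i) and the left inequality.
\item Second, apply the right inequality of Proposition~\ref{prop : classical vs full} to the hypergraph $\mathcal{H}_{[G]}$, which has $n$ vertices. This gives $ch^*_{CN}(G)\le \chi^*_{CN}(G)\ln n+1\le 2\,ch^*_{ON}(G)\ln n+1$.
\end{enumerate}
This route is immediate, so all the real work lies in part (i).
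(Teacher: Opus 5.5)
The paper does not prove either part of this proposition; both are cited. So your proposal has to be judged on its own merits.

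\textbf{Part (ii) is correct.} The chain is $\chi^*_{ON}(G)\le ch^*_{ON}(G)$ (constant lists, or the left inequality $\chi^*_{CF}\le ch^*_{CF}$ for $\mathcal{H}_{(G)}$), then part (i), then the right inequality $ch^*_{CF}(\mathcal{H})\le\chi^*_{CF}(\mathcal{H})\ln n+1$ for $\mathcal{H}_{[G]}$. Together these give $ch^*_{CN}(G)\le 2\,ch^*_{ON}(G)\ln n+1$.

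\textbf{Part (i) has a gap.} You stop exactly where the argument lives (``we then verify case by case \ldots\ we would mirror the published argument''). Two intermediate claims are also off:
\begin{itemize}
\item In step~3 the bad case cannot be ``ruled out''. Take $G=K_2$ with both vertices colored $1$: both lie in $B$, and recoloring both to $1+k$ leaves no unique color.
\item The danger you name in step~4 is not real. Replacing $c$ by $c+k$ never creates a new occurrence of a base color, so vertices of $A$ are never hurt. The only danger is inside $B$.
\end{itemize}

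\textbf{The missing idea: the pair graph is a matching on $B$.} For $v\in B$, let $p(v)$ be its unique neighbor with $f(p(v))=f(v)$. If $w\in B$ has $p(w)=v$, then $w$ is a neighbor of $v$ with color $f(v)$, hence $w=p(v)$. So in the graph with edges $\{v,p(v)\}$, $v\in B$, every vertex of $B$ has degree exactly one. The pairs inside $B$ form a matching, and every other $v\in B$ has $p(v)\in A$. Consequently a maximal independent $R\subseteq B$ contains exactly one endpoint of each $B$--$B$ pair and every $v\in B$ with $p(v)\in A$. That is, for each $v\in B$ exactly one of $v,p(v)$ is recolored, which is precisely what is needed.

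\textbf{The verification.} Recolor $R$ by $c\mapsto c+k$ and check each kind of vertex.
\begin{itemize}
\item If $v\in B\cap R$, then $p(v)\notin R$ is the only vertex of $N_G[v]$ with color $f(v)$.
\item If $v\in B\setminus R$, maximality forces $p(v)\in R$. So $v$ itself is the only vertex of $N_G[v]$ with color $f(v)$.
\item If $w\in A$ and $f(w)$ is unique in $N_G[w]$, then $w\notin R$ and no neighbor acquires color $f(w)$.
\item Otherwise some $x\in N_G(w)$ has a color $d\neq f(w)$ unique in $N_G(w)$. Then $x$ is the only vertex of $N_G[w]$ with base color $d$, so whichever of $d$ and $d+k$ it ends with is unique in $N_G[w]$.
\end{itemize}
With this inserted, your step-5 construction proves (i) with $2k$ colors.
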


Motivated by its applications in the frequency-assignment problem in wireless networks, conflict-free coloring was first studied by Even et al. \cite{even2003conflict} in 2003. Conflict-free coloring was later studied in several different settings (see \cite{cheilaris2009conflict, glebov2014conflict, pach2009conflict, bhyravarapu2021short, dkebski2022conflict, bhyravarapu2022conflict, LEVTOV20091521, ElbassioniM06, pach2003conflict, har2003conflict, alon2006conflict}).
For a survey on conflict-free coloring and its applications, see Smorodinsky \cite{smorosurvey}. Conflict-free coloring has found applications in various areas, including frequency assignment in cellular networks, energy-efficient sensor networks, vertex ranking with applications in VLSI design and operations research, and the pliable index coding problem in coding theory \cite{krishnan2022pliable}.
The classical conflict-free coloring framework assumes that all vertices can choose colors from a common global set, an assumption that is often too restrictive in practical settings. This motivates the study of the list variant, in which each vertex is assigned a list of permissible colors, providing a more realistic and general model of constrained coloring scenarios. For detailed applications of conflict-free choosability, see \cite{gupta2026bounds}.

\subsection{Background and Our Contribution}
Conflict-free choosability was first studied by Cheilaris, Smorodinsky, and Sulovsk{\`y} \cite{cheilaris2011potential} in the context of certain geometric hypergraphs. Gupta and Mathew \cite{gupta2026bounds} showed that, for a graph $G$ with maximum degree $\Delta$, the CFCN$^*$ choice number of $G$ is $O(\ln^2 \Delta)$ thus generalizing a similar result of Bhyravarapu et al. \cite{bhyravarapu2021short} for the CFCN$^*$ chromatic number of $G$.  
Recently, Wu and Zhang \cite{wu2025list} showed that for every planar graph $G$, $ch_{ON}(G) \le 12$. 

D\k{e}bski and Przyby\l o \cite{dkebski2022conflict} showed that, for any graph $G$ with maximum degree $\Delta$, $\chi^*_{CN}(L(G)) = O(\ln \Delta)$ where $L(G)$ denotes the line graph of $G$. Line graphs are known to be a subclass of claw-free (or $K_{1,3}$-free) graphs. It was asked as an open question in \cite{dkebski2022conflict} to extend the above result to claw-free graphs. Bhyravarapu et al. \cite{bhyravarapu2025extremal} solved this open question by showing that if $G$ is $K_{1, k}$-free with maximum degree $\Delta$, then $\chi^*_{CN}(G) = O(k\ln \Delta)$. In this paper, we generalize this result to a list coloring setting. We prove that if $G$ is a $K_{1,k}$-free graph with maximum degree $\Delta$, then $ch^*_{CN}(G)=O(k\ln \Delta)$.

In Section \ref{Hardness Results}, we investigate the hardness of decision problems related to CFON/CFCN choosability and their partial variants which are defined below. 

\defproblem{\textsc{$k$-CFON$^*$-choosability problem ($k$-CFON$^*$-CH PROBLEM)}}{A graph $G$ and a positive integer $k$.}{Is $ch^*_{ON}(G) \le k$?}

\defproblem{\textsc{$k$-CFON-choosability problem ($k$-CFON-CH PROBLEM)}}{A graph $G$ and a positive integer $k$.}{Is $ch_{ON}(G) \le k$?}

Analogously, we define the \textsc{$k$-CFCN$^*$-choosability problem ($k$-CFCN$^*$-CH PROBLEM)} and the \textsc{$k$-CFCN-choosability problem ($k$-CFCN-CH PROBLEM)}.

Gupta and Mathew \cite{gupta2026bounds} proved that the following decision problems are $\Pi_2^{P}$-complete:
(i) the \textsc{$k$-CFON-choosability} problem,
(ii) the \textsc{$k$-CFON$^*$-choosability} problem, and
(iii) the \textsc{$(2,k)$-CFCN-choosability} problem.
These hardness results hold for bipartite graphs for any fixed $k \geq 3$, for planar triangle-free graphs when $k=3$, and for planar graphs when $k=4$. They also posed the following problems as open questions. The computational complexity of
(i) \textsc{$k$-CFON$^*$-choosability} for $k=1,2$,
(ii) \textsc{$k$-CFON-choosability} for $k=2$,
(iii) \textsc{$k$-CFCN$^*$-choosability} for all $k \ge 1$, and
(iv) \textsc{$k$-CFCN-choosability} for all $k \ge 2$
remains unresolved.

In this paper, we make progress in some of the open questions posed by Gupta and Mathew \cite{gupta2026bounds}. In particular, we show that the following are NP-hard: 
(i) \textsc{$k$-CFON$^*$-choosability} for $k=1,2$, and
(ii) \textsc{$k$-CFCN$^*$-choosability} for $k=1,2$.

\subsection{Preliminaries}
\subsubsection{Definitions and Notations}
We study simple, finite, and undirected graphs throughout this paper. When discussing open neighborhood coloring, we assume the graph under consideration has no isolated vertices. The \emph{degree} of a vertex $v \in V$ in a given hypergraph $\mathcal{H} = (V, \mathcal{E})$, denoted by $d_{\mathcal{H}}(v)$, is the number of hyperedges that $v$ is present in. The \emph{maximum degree} of $\mathcal{H}$ is defined as $\max \{d_\mathcal{H}(v) :v \in V\}$. Let $K_{1, k}$ denote the complete bipartite graph with one vertex in one part and $k$ vertices in other part. A graph is called claw-free if it does not contain an induced subgraph isomorphic to the claw graph $K_{1,3}$. 

\subsubsection{Probabilistic Inequalities and Known Results}
The following results on conflict-free coloring are used in the proof of Theorem \ref{claw free graph}.  

\begin{theorem} \textup{\cite{cheilaris2011potential}}
\label{max degree + 1}
For any hypergraph $\mathcal{H}$ with maximum degree at most $\Delta$, $ch_{CF}(\mathcal{H}) \leq \Delta + 1$. 
\end{theorem}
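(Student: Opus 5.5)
This follows from a greedy argument on the hyperedges. Fix a hypergraph $\mathcal{H}=(V,\mathcal{E})$ of maximum degree at most $\Delta$ and an arbitrary $(\Delta+1)$-assignment $\mathcal{L}=\{L_v\}$. We must colour \emph{every} vertex (this is the full version $ch_{CF}$), with $f(v)\in L_v$, so that each hyperedge has a uniquely coloured vertex. Non-empty hyperedges are assumed; empty ones cannot be satisfied by any colouring and are excluded by convention.

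The plan is to choose, for each hyperedge, a designated ``unique'' vertex, colour the designated vertices first, and then colour everything else while avoiding clashes.

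\textbf{Step 1 (designated vertices).} Process the hyperedges in an arbitrary order. For each $E\in\mathcal{E}$ pick any vertex $u_E\in E$ and call it the representative of $E$. A vertex may represent several hyperedges, but never more than $\Delta$. Let $R$ be the set of representatives.

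\textbf{Step 2 (colouring).} Colour the vertices of $V$ one at a time in some order, maintaining the following invariant. For every hyperedge $E$ whose representative $u_E$ is already coloured, no other already-coloured vertex of $E$ has colour $f(u_E)$.

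When colouring a vertex $v$, the forbidden colours are of two kinds.
\begin{itemize}
\item[(a)] Colours $f(u_E)$ for hyperedges $E\ni v$ with $u_E\neq v$ and $u_E$ already coloured. Otherwise $v$ would break the uniqueness of $u_E$ in $E$.
\item[(b)] If $v$ itself represents some hyperedges, then for each such $E$ the colours already used on $E\setminus\{v\}$.
\end{itemize}

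Type (a) forbids at most $\Delta$ colours, one per hyperedge containing $v$, and that alone leaves a free colour in a list of size $\Delta+1$. Type (b) is the problem: a large hyperedge $E$ may already contain many coloured vertices, and the simple count no longer works.

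\textbf{Fixing the order.} To remove type (b) entirely, colour all representatives \emph{last}. Then no representative is coloured before a non-representative, so constraint (a) never arises for a non-representative. But (b) still arises among representatives and with the previously coloured non-representatives, so this alone does not suffice.

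\textbf{Better fix: make the representatives distinct.} Instead, I would choose distinct representatives via Hall's theorem whenever this is possible. When it is not, use the standard trick: every hyperedge needs only one vertex with a unique colour. Concretely, I would aim for an ordering $v_1,\dots,v_n$ of the vertices and the rule ``colour $v_i$ avoiding the colours $f(u_E)$ of the hyperedges $E\ni v_i$ already \emph{settled},'' where a hyperedge $E$ becomes settled at the moment its \emph{last} vertex is coloured, with $u_E$ being that last vertex.

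\textbf{Reverse order.} This suggests colouring in reverse. Order the vertices arbitrarily and colour them in the reverse of that order. Assign each hyperedge $E$ its representative $u_E$, the \emph{first} vertex of $E$ to be coloured in the reverse order. When $v$ is coloured, for each $E\ni v$ we have either $v=u_E$, or $u_E$ is already coloured.
\begin{itemize}
\item For $v\neq u_E$: avoid $f(u_E)$. This forbids at most one colour per such hyperedge.
\item For $v=u_E$: no earlier vertex of $E$ is coloured, so no constraint is needed now; later vertices of $E$ will avoid $f(v)$.
\end{itemize}
Thus $v$ faces at most $d(v)\le\Delta$ forbidden colours and always has a free colour in its list of size $\Delta+1$. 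At the end, $u_E$'s colour appears exactly once in $E$ for every $E$, so every vertex is coloured and the colouring is conflict-free.

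The only subtle point is the choice of representative as the first-coloured vertex of each hyperedge. Everything else is a one-colour-per-hyperedge counting argument.
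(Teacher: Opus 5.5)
Your final ``reverse order'' argument is correct. It is the standard greedy proof of this result, which the paper does not reprove but quotes from Cheilaris, Smorodinsky and Sulovsk\'y: with $u_E$ the first-coloured vertex of $E$, each vertex $v$ must avoid only the colours $f(u_E)$ for the at most $d(v)\le\Delta$ hyperedges $E\ni v$ with $u_E\neq v$, so a list of size $\Delta+1$ always has a free colour and $f(u_E)$ ends up unique in every $E$. The earlier detours (Steps 1--2, ``colour representatives last'', Hall's theorem) should simply be deleted, and the reversal is immaterial since any fixed order works.
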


\begin{theorem} \textup{\cite{gupta2026bounds}}
\label{thm: general cfcn upper bound}
For any graph $G$ with maximum degree $\Delta$, $ch^*_\mathrm{CN}(G) = O(\ln^2 \Delta)$.
\end{theorem}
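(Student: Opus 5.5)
The plan is to adapt the degree-class argument behind the $O(\ln^2\Delta)$ bound for $\chi^*_{CN}$ to lists. We assign to every colored vertex a \emph{label} from a set of $t = O(\ln^2\Delta)$ labels, and arrange two things. First, every vertex $u$ has some label $\ell$ carried by exactly one vertex of $N[u]$. Second, vertices of different labels never share a color. Under these two conditions the unique $\ell$-labelled vertex in $N[u]$ carries a color that is unique in $N[u]$.

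\textbf{Labels from degree classes.} Write $\lceil\log_2(\Delta+1)\rceil$ degree classes, where class $i$ consists of the vertices $u$ with $|N[u]|\in[2^{i},2^{i+1})$. For each class $i$ we reserve $r=C\ln\Delta$ labels $(i,1),\dots,(i,r)$. For each label $(i,j)$, every vertex independently becomes \emph{active} for $(i,j)$ with probability $2^{-i}$. For $u$ in class $i$, a fixed label $(i,j)$ is active on exactly one vertex of $N[u]$ with probability bounded below by an absolute constant $c>0$. Since the $r$ labels of class $i$ are independent, the probability that none of them works for $u$ is at most $(1-c)^{r}\le\Delta^{-4}$ once $C$ is large.

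\textbf{Colors from labels.} Take lists of size $k=Ct$ and choose a random map $h$ from colors to labels. A vertex $v$ that is active for several labels keeps one of them at random, or we first thin the activations so that each vertex is active for at most one label. If $v$ ends up with label $\ell$, it is colored with some $c\in L_v$ such that $h(c)=\ell$. Because the color now determines the label, two vertices with different labels automatically receive different colors. Each individual list hits each label with probability $1-(1-1/t)^{Ct}\ge 1-e^{-C}$, which only costs a constant factor in $c$. The bad event $A_u$ is that $u$ has no uniquely-labelled label. It depends only on the choices made inside $N[u]$, so each $A_u$ is mutually independent of all but $O(\Delta^2)$ others, and the Lov\'asz Local Lemma then finishes the proof. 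The thinning step also has to be checked: a vertex active for many labels of low classes must not destroy uniqueness elsewhere. I would handle this with a union bound, using that the expected number of labels a vertex is active for is $\sum_i r2^{-i}=O(\ln\Delta)$; discarding all but one of them costs at most a $1/O(\ln\Delta)$ factor, which can be absorbed by enlarging $r$ by a constant.

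\textbf{Main obstacle.} The map $h$ is global: one color may appear in lists all over the graph, so conditioning on $h$ destroys the local dependency structure needed for the Local Lemma. My first attempt is to replace $h$ with a local choice. Each vertex $v$ partitions its own list $L_v$ arbitrarily into $t$ blocks of size $C$, and uses block $\ell$ when it carries label $\ell$, picking a uniformly random color from that block. The conflict to rule out is this: the unique $\ell$-vertex $w\in N[u]$ gets color $c$, and some other colored neighbor of $u$ happens to pick the same $c$. Each colored neighbor picks $c$ with probability at most $1/C$, and $u$ has $O(\ln\Delta)$ colored neighbors in expectation. A collision can therefore be made unlikely by taking $C=\Theta(\ln\Delta)$, but that costs an extra logarithmic factor. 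Removing this loss, or tolerating it by running the class argument with $O(1)$ reserved labels per class and boosting the success probability only through the color randomness, is the step I expect to be the delicate one. If neither works, I would fall back to a two-round scheme: first fix the uniquely-labelled vertex for each $u$, then recolor collisions using Theorem~\ref{max degree + 1} on the bounded-degree conflict hypergraph they induce.
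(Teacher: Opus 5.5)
The paper gives no proof of this statement (it is quoted from Gupta and Mathew), so your proposal has to stand on its own, and as written it is not a proof. Your reduction is sound: a unique label plus label-separated colors gives a unique color. But the one step specific to lists is left open: making the witness's color unique in $N[u]$ when palettes of different labels cannot be kept apart. Each route you sketch for it fails, for stronger reasons than the ones you name.

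\textbf{The global map $h$.} This is not just a Local Lemma technicality. The class palettes $h^{-1}(\{i\}\times[r])$ are pairwise disjoint, so if the color universe has at least $2Ct$ colors, then for every $h$ some class $i$ has at least $Ct$ colors outside its palette. Suppose an adversary builds, for every class $i$ and every $Ct$-subset $L$ of colors, a separate component containing a class-$i$ vertex, with every vertex of that component receiving the list $L$. Then for every $h$, some class-$i$ vertex has no color of its own class anywhere in its closed neighborhood, and your analysis of that vertex fails with probability one.

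\textbf{The local block partition.} This fails by far more than a logarithm. Labels of class $0$ are active on every vertex with probability $1$, so after thinning every vertex is colored. A vertex $u$ of large degree therefore has $|N[u]|-1$ (up to $\Delta$) other colored vertices in $N[u]$, not $O(\ln\Delta)$. With that count, your collision bound $(|N[u]|-1)/C$ is useless unless $C=\Omega(\Delta)$.

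\textbf{The two-round fallback.} It assumes exactly what is missing. A vertex lies in up to $\Delta+1$ closed neighborhoods, each of which may forbid it a witness color, so Theorem~\ref{max degree + 1} only gives lists of size $O(\Delta)$. You would first need to show that each vertex is subject to only polylogarithmically many such constraints, and nothing in your scheme provides that.

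\textbf{The labelling step.} This is also incorrect as written. A vertex is active for about $\sum_i r2^{-i}\approx 2r$ labels, so keeping one at random retains a given label with probability $\Theta(1/r)$. This loss grows with $r$, so enlarging $r$ cannot absorb it. Concretely, take $u$ in class $i\ge 1$. Each vertex of $N[u]$ independently keeps a class-$i$ label with probability at most $E[X_i]/r=2^{-i}$, where $X_i$ is its number of active class-$i$ labels (it always has at least $r$ active labels). Hence with probability at least $e^{-2|N[u]|2^{-i}}>e^{-4}$, no class-$i$ label appears in $N[u]$ at all, whatever $C$ is.

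\textbf{Small degrees.} These are a separate obstruction. Whenever all vertices draw their single label from one common distribution on $t$ labels, a vertex with $|N[u]|=2$ has no uniquely labelled vertex in $N[u]$ with probability at least $1/(t+1)=\Omega(\ln^{-2}\Delta)$. That is far above the $O(\Delta^{-2})$ bound the symmetric Local Lemma needs here, so such vertices require a separate argument.

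What is missing overall is a local substitute for disjoint palettes. For comparison, the paper's own list argument for $K_{1,k}$-free graphs gets one from Lemma~\ref{lem_near_uniform_hypergraph} (many unique colors per hyperedge), combined with deleting from each list the few already-fixed unique colors that vertex could spoil. That works only because $K_{1,k}$-freeness caps those deletions at $(k-1)b+1$ per vertex. For general graphs you would need an analogous polylogarithmic control, and the proposal contains none.
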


We state the Local Lemma \cite{lovaszlocallemma} and a variant of the Talagrand’s inequality due to \cite{MolloyR14} below. These are  used in the proof of Lemma~\ref{lem_near_uniform_hypergraph}. 

\begin{lemma}[\emph{The Local Lemma}, \cite{lovaszlocallemma}] \label{lem:local} Let $A_1, \ldots , A_n$ be events in an arbitrary probability space. Suppose that each event $A_i$ is mutually independent of a set of all the other events $A_j$ but at most $d$, and that $Pr[A_i] \leq p$ for all $i \in [n]$. If  $4pd \leq 1$, then $Pr[\cap _{i=1}^n \overline{A_i}] > 0$.  
\end{lemma}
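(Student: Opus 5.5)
The approach is the standard inductive proof of the symmetric Local Lemma. We may assume $d \ge 1$. If $d=0$ the events are mutually independent, and each $\Pr[A_i] \le p \le \ldots$. Strictly speaking this case needs $p<1$, which is the usual convention. The $d=0$ case then follows from the product formula directly.

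For $d\ge1$, the condition $4pd\le 1$ gives $p\le 1/4$. For each $i$, let $D_i$ be the set of at most $d$ indices $j\ne i$ such that $A_i$ is mutually independent of $\{A_j : j\notin D_i\cup\{i\}\}$.

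\textbf{Key claim.} For every $S\subseteq[n]$ and every $i\notin S$ with $\Pr[\bigcap_{j\in S}\overline{A_j}]>0$,
\[
\Pr\Bigl[A_i \Bigm| \textstyle\bigcap_{j\in S}\overline{A_j}\Bigr]\le 2p .
\]
I would prove this by induction on $|S|$. The case $S=\emptyset$ is just $\Pr[A_i]\le p$.

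For the inductive step, split $S$ into $S_1=S\cap D_i$ and $S_2=S\setminus D_i$. If $S_1=\emptyset$, mutual independence gives the bound $p$ directly. Otherwise, write
\[
\Pr\Bigl[A_i \Bigm| \textstyle\bigcap_{S}\overline{A_j}\Bigr]=\frac{\Pr[A_i\cap\bigcap_{S_1}\overline{A_j}\mid \bigcap_{S_2}\overline{A_j}]}{\Pr[\bigcap_{S_1}\overline{A_j}\mid \bigcap_{S_2}\overline{A_j}]}.
\]
The numerator is at most $\Pr[A_i\mid\bigcap_{S_2}\overline{A_j}]=\Pr[A_i]\le p$, by mutual independence of $A_i$ from the events indexed by $S_2$.

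For the denominator, use the union bound together with the induction hypothesis. Each $j\in S_1$ satisfies $\Pr[A_j\mid\bigcap_{S_2}\overline{A_j}]\le 2p$, because $|S_2|<|S|$. Hence the denominator is at least
\[
1-\sum_{j\in S_1}2p\ \ge\ 1-2pd\ \ge\ 1/2 .
\]
Dividing gives the bound $2p$. Positivity of the conditioning events has to be tracked alongside the induction. This follows because each conditional probability $\Pr[\overline{A_j}\mid\cdots]\ge 1-2p>0$, so I would prove positivity simultaneously with the claim.

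\textbf{Conclusion.} By the chain rule,
\[
\Pr\Bigl[\textstyle\bigcap_{i=1}^n\overline{A_i}\Bigr]=\prod_{i=1}^n\Pr\Bigl[\overline{A_i}\Bigm|\textstyle\bigcap_{j<i}\overline{A_j}\Bigr]\ge(1-2p)^n>0,
\]
since $p\le1/4$.

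The main obstacle is the denominator estimate in the inductive step. It requires setting up the induction over all subsets $S$, rather than only prefixes, so that the hypothesis applies to $S_2$. It also requires carrying the positivity of the conditioning events through the induction. Once the claim is formulated in this generality, the remaining steps are routine.
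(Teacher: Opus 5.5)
The paper gives no proof of this lemma: it is quoted from Erd\H{o}s--Lov\'asz in the $4pd\le 1$ form and used as a black box in the proof of Lemma~\ref{lem_near_uniform_hypergraph}, so there is no argument to compare yours against. Your proof is the standard inductive proof of exactly this form, and it is correct: the bound $\Pr[A_i\mid\bigcap_{j\in S}\overline{A_j}]\le 2p$ via the split $S=S_1\cup S_2$, the union bound on the denominator, then the chain rule.

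Your $d=0$ caveat points to a genuine defect in the statement as written; take $n=1$ and $A_1$ the whole space. It is harmless in the paper, because there $d=\Gamma\ge 1$.

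You also need not carry positivity through the claim's induction. Positivity is a hypothesis of the claim, and $\Pr[\bigcap_{j\in S_2}\overline{A_j}]\ge\Pr[\bigcap_{j\in S}\overline{A_j}]>0$. It is therefore only needed in the final chain-rule step, where it follows inductively from $1-2p>0$.
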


\begin{theorem}
[Talagrand's Inequality, \cite{MolloyR14}]
\label{thm_Talagrand}
Let $X$ be a non-negative random variable determined by the independent trials $T_1, \ldots, T_n$. Suppose that for every set of possible outcomes of the trials, we have:
\\
\textnormal{(i) changing the outcome of any one trial can affect $X$ by at most $a$; and
\\(ii) for each $s>0$, if $X \geq s$ then there is a set of at most $bs$ trials whose outcomes certify that $X \geq s$.}
\\Then for any $t \geq 0$, we have
$$Pr[|X - E[X]| > t + 20a\sqrt{b E[X]} + 64a^2b] \leq 4e^{-\frac{t^2}{8a^2b(E[X] + t)}}.$$
\end{theorem}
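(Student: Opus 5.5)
This is the variant of Talagrand's inequality due to Molloy and Reed, and the paper takes it from \cite{MolloyR14}. If one wanted to re-derive it, I would follow the standard two-stage route. First prove concentration of $X$ around a median $M$. Then show that the mean $E[X]$ and the median differ by at most $20a\sqrt{bE[X]}+64a^2b$. Combining the two gives the stated bound.

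\textbf{Stage 1: concentration around the median.} Let $\Omega=\prod_i\Omega_i$ be the product space of the trials $T_1,\ldots,T_n$. The starting point is Talagrand's convex distance inequality: for any $A\subseteq\Omega$ and $\tau\ge 0$,
$$\Pr[A]\cdot\Pr[d_T(\omega,A)\ge \tau]\le e^{-\tau^2/4},$$
where $d_T(\omega,A)=\sup_{\|\alpha\|_2=1}\inf_{y\in A}\sum_{i:\,\omega_i\ne y_i}\alpha_i$. I would apply it as follows.
\begin{itemize}
\item Fix $s$ and take $A=\{y: X(y)\le s-t\}$.
\item Suppose $X(\omega)\ge s$, and let $I$ be a certifying set of at most $bs$ trials, as given by hypothesis (ii).
\item Take any $y\in A$. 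Let $\omega'$ agree with $\omega$ on $I$ and with $y$ elsewhere. Then $X(\omega')\ge s$, because the trials in $I$ certify $X\ge s$.
\item Moving from $\omega'$ to $y$ changes only coordinates in $I$ where $\omega$ and $y$ differ. By hypothesis (i), each such change alters $X$ by at most $a$. So $t\le X(\omega')-X(y)\le a\,|\{i\in I:\omega_i\ne y_i\}|$.
\item Choose $\alpha$ uniform on $I$, that is $\alpha_i=|I|^{-1/2}$ for $i\in I$. This gives $d_T(\omega,A)\ge t/(a\sqrt{bs})$.
\end{itemize}
Hence $\Pr[X\le s-t]\,\Pr[X\ge s]\le e^{-t^2/(4a^2bs)}$. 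Taking $s=M+t$ gives the upper tail, and taking $s=M$ gives the lower tail. Since each median probability is at least $1/2$, we get $\Pr[|X-M|>t]\le 4e^{-t^2/(8a^2b(M+t))}$, allowing for constant slack.

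\textbf{Stage 2: comparing median and mean.} Bound $|E[X]-M|\le E|X-M|=\int_0^\infty\Pr[|X-M|>t]\,dt$ using the Stage 1 tail bound. The integral naturally splits into two regimes:
\begin{itemize}
\item for $t\le M$ the tail is Gaussian-like, contributing $O(a\sqrt{bM})$;
\item for $t\ge M$ it is exponential-like, contributing $O(a^2b)$.
\end{itemize}
So $|E[X]-M|\le c_1a\sqrt{bM}+c_2a^2b$. I would then replace $M$ by $E[X]$, solving the resulting quadratic inequality in $\sqrt{M}$, to obtain $|E[X]-M|\le 20a\sqrt{bE[X]}+64a^2b$. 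Finally apply Stage 1 with $t$ replaced by $t$ plus this shift, and re-express $M+t$ in terms of $E[X]+t$ inside the exponent.

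\textbf{Main obstacle.} The certificate step in Stage 1 is the conceptual core: one must argue correctly through the hybrid point $\omega'$ so that only coordinates in $I$ matter. The most error-prone part, however, is the constant bookkeeping in Stage 2, which is needed to land exactly on $20$, $64$ and the factor $8$ in the exponent. I would first run the argument with unspecified absolute constants. Only after that would I tune the splitting point of the integral to match the constants stated in \cite{MolloyR14}.
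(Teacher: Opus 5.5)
The paper gives no proof of this statement; it imports it from \cite{MolloyR14}, so deferring to that citation, as you do, is exactly the paper's treatment. Your Stage 1 is correct. The hybrid point $\omega'$ and the uniform weights on the certificate $I$ give $\Pr[X\le s-t]\,\Pr[X\ge s]\le e^{-t^2/(4a^2bs)}$, and hence $\Pr[|X-M|\ge t]\le 4e^{-t^2/(4a^2b(M+t))}$ for a median $M$.

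Stage 2, however, would not reach the stated bound as you describe it. There are two problems.
\begin{itemize}
\item You give away the factor $2$ in the exponent as ``slack'', but the last step needs it. If $M>E[X]$ then $M+t>E[X]+t$, so replacing $8a^2b(M+t)$ by $8a^2b(E[X]+t)$ goes the wrong way. If the shift is only $|E[X]-M|$, there is no spare deviation to compensate.
\item Bounding $|E[X]-M|$ by the two-sided $E|X-M|$ and then solving a quadratic in $\sqrt{M}$ is too lossy. With your weakened tail and the split at $t=M$ you get $c_1=8\sqrt{\pi}$ and $c_2=64$. The quadratic step then adds at least $c_1^2a^2b$: already at $E[X]=0$ it only gives $M\le 316a^2b$. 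So the $a^2b$ coefficient comes out above $300$, not $64$.
\end{itemize}

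The missing ingredient is the one place where non-negativity of $X$ matters. By Markov, $\Pr[X\ge 2E[X]]\le\frac12$ (when $E[X]>0$), so the smallest median satisfies $M\le 2E[X]$. Keep Stage 1 in its sharp form and split at $t=M$:
\[
|E[X]-M|\le\int_0^\infty \Pr[|X-M|>t]\,dt\le\int_0^\infty 4e^{-\frac{t^2}{8a^2bM}}\,dt+\int_0^\infty 4e^{-\frac{t}{8a^2b}}\,dt=4\sqrt{2\pi}\,a\sqrt{bM}+32a^2b .
\]
Using $M\le 2E[X]$, this is at most $8\sqrt{\pi}\,a\sqrt{bE[X]}+32a^2b\le \delta:=20a\sqrt{bE[X]}+64a^2b$, with no quadratic needed. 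Then $|X-E[X]|>t+\delta$ forces $|X-M|>t$. Finally, $4a^2b(M+t)\le 4a^2b(2E[X]+t)\le 8a^2b(E[X]+t)$ turns $4e^{-t^2/(4a^2b(M+t))}$ into exactly the stated right-hand side.

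Your quadratic route can also be salvaged. Bound $E[X]-M$ and $M-E[X]$ separately with the one-sided tails; this shrinks the constants enough to spend part of $\delta$ as extra deviation. Markov is the cleaner fix.
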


\section{Combinatorial Results for Conflict-free Choosability}

The following lemma gives a coloring of the vertices of a `nearly uniform' hypergraph in which every hyperedge $E$ sees at least $\frac{|E|}{8}$ distinctly colored vertices.
This lemma plays a crucial role in proving Theorem~\ref{claw free graph}. 
 
\begin{lemma} 
\label{lem_near_uniform_hypergraph}
Let $\mathcal{H} = (V,\mathcal{E})$ be a hypergraph satisfying the below conditions: 
\\
(i) Every hyperedge intersects with at most $\Gamma$ other hyperedges, and \\
(ii) For every hyperedge $E \in \mathcal{E}$, $\alpha \le |E| \leq \beta$, where $\alpha = \max (2^{12}, \lceil 136 \ln(16 \Gamma)\rceil)$.\\
Let $\mathcal{L}=\{L_v~:~v \in V(\mathcal{H})\}$ be a $32 \beta$-assignment for $\mathcal{H}$. Then there is an $\mathcal{L}$-coloring of $\mathcal{H}$ in which every hyperedge $E$ sees at least $\frac{|E|}{8}$ unique colors. 
\end{lemma}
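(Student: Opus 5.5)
We assign every vertex $v$ a color chosen independently and uniformly at random from its list $L_v$, which has size $32\beta$. We then show that, with positive probability, every hyperedge $E$ sees at least $|E|/8$ colors that appear exactly once in $E$. For a hyperedge $E$, let $X_E$ be the number of vertices $v\in E$ whose color is unique within $E$. The number of unique colors in $E$ equals $X_E$, so it suffices to show $X_E\ge |E|/8$. Let $A_E$ be the bad event $X_E<|E|/8$. The event $A_E$ is determined by the colors of the vertices of $E$, so it is mutually independent of all events $A_{E'}$ with $E'\cap E=\emptyset$. Hence each bad event depends on at most $d=\Gamma$ others. By the Local Lemma (Lemma~\ref{lem:local}) it is enough to prove $\Pr[A_E]\le p$ with $4p\Gamma\le 1$; we aim for $p\le \frac{1}{4\Gamma}$.

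\textbf{Expectation.} Fix $v\in E$. For any $u\in E\setminus\{v\}$ and any fixed color $c$, we have $\Pr[f(u)=c]\le 1/(32\beta)$. Therefore, conditioning on $f(v)$ and taking a union bound, $v$ fails to be unique with probability at most $(|E|-1)/(32\beta)\le 1/32$. Hence $E[X_E]\ge \frac{31}{32}|E|$.

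\textbf{Concentration.} $X_E$ is not directly certifiable, since uniqueness is a "negative" property. So we work with the complement count. Let $Y_E=|E|-X_E$ be the number of vertices of $E$ whose color is repeated within $E$. Then $E[Y_E]\le |E|/32$, and $A_E$ is the event $Y_E>\frac{7}{8}|E|$. We apply Talagrand's inequality (Theorem~\ref{thm_Talagrand}) to $Y_E$, with the trials being the colors of the vertices of $E$.
\begin{itemize}
\item Changing one vertex's color changes $Y_E$ by at most $2$: the vertex itself can switch status, and at most one former partner can become unique or one new partner can become non-unique. So we may take $a=2$.
\item If $Y_E\ge s$, then the colors of the $s$ repeated vertices together with one partner each certify this. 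That is at most $2s$ trials, so we may take $b=2$.
\end{itemize}
Since $E[Y_E]$ may be much smaller than $|E|$, it is cleaner to deviate from it by a large $t$. Take $t=\frac{3}{4}|E|$. We must check that $\frac{|E|}{32}+t+20a\sqrt{bE[Y_E]}+64a^2b\le \frac78|E|$. This holds because $20\cdot2\sqrt{2|E|/32}=10\sqrt{|E|}$ and $64\cdot 8=512$ are both small compared with $|E|/16$ once $|E|\ge 2^{12}$. (At the edge, $10\sqrt{|E|}$ is $640=|E|/6.4$ when $|E|=2^{12}$, which is too large, so we take $t=\frac{1}{2}|E|$ to leave more slack. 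The constants need to be tuned here. If the calculation is tight, we can instead use $E[Y_E]$ in place of the upper bound via monotonicity, applying Talagrand to a variable whose mean we upper-bound.)

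This gives
$$\Pr[A_E]\le 4\exp\!\left(-\frac{t^2}{8a^2b(E[Y_E]+t)}\right)\le 4\exp(-c|E|)$$
with a constant $c$ around $1/136$ or so. This matches the form of $\alpha$: with $|E|\ge 136\ln(16\Gamma)$ we get $\Pr[A_E]\le 4/(16\Gamma)=1/(4\Gamma)$. The Local Lemma then finishes the proof.

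The main obstacle is this concentration step, specifically choosing $t$ and verifying the constants. The error terms $20a\sqrt{bE[Y]}+64a^2b$ must be absorbed using $|E|\ge 2^{12}$, and the exponent must come out to at least $|E|/136$ so that the threshold $\alpha$ exactly suffices. I would first compute with $t=|E|/2$ and adjust if needed. Note that $Y_E$ can be defined without dependence on vertices outside $E$, which makes the dependency bound $d\le\Gamma$ valid.
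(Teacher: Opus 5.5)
Your plan is the same as the paper's proof: a uniform random choice from each list, Talagrand's inequality applied to the number of vertices of $E$ whose color is repeated in $E$ (the paper's $X_E$, your $Y_E$) with $a=b=2$ and $E[Y_E]\le |E|/32$, and the Local Lemma with $d=\Gamma$. The paper also takes $t=|E|/2$.

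What your write-up leaves undone is the numerical check, which you yourself flag as the main obstacle. That check is the entire reason for the constants $2^{12}$ and $136$ in the statement, so it should be carried out rather than left at ``$c$ around $1/136$ or so''. It does go through with $t=|E|/2$:
\begin{itemize}
\item Since $E[Y_E]+t\le \frac{17|E|}{32}$ and $8a^2b=64$, the exponent is at least $\frac{|E|^2/4}{64\cdot 17|E|/32}=\frac{|E|}{136}$.
\item So $|E|\ge 136\ln(16\Gamma)$ gives $\Pr[A_E]\le 4e^{-\ln(16\Gamma)}=\frac{1}{4\Gamma}$, which is precisely the $p$ needed for $4p\Gamma\le 1$.
\item For the threshold, $E[Y_E]+t+20a\sqrt{bE[Y_E]}+64a^2b\le \frac{17|E|}{32}+10\sqrt{|E|}+512<\frac{7|E|}{8}$. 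This holds because $10\sqrt{|E|}+512<\frac{11|E|}{32}$ for all $|E|\ge 2^{12}$: at $|E|=2^{12}$ it reads $1152<1408$, and the gap only grows.
\end{itemize}

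Two side remarks. First, your initial choice $t=\frac{3}{4}|E|$ genuinely fails at $|E|=2^{12}$. It would need $10\sqrt{|E|}+512<\frac{3|E|}{32}$, i.e.\ $1152<384$, so the switch to $t=|E|/2$ is forced, not a matter of extra slack. Second, your parenthetical about ``monotonicity'' is not a fallback; it is exactly what the computation above already does. The deviation threshold is increasing in $E[Y_E]$ and the exponent is decreasing in it, so substituting the upper bound $|E|/32$ is legitimate in both places.
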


\begin{proof}
For each vertex in $V$, assign a color that is chosen independently, uniformly at random from its list of size $32\beta$. For any hyperedge $E \in \mathcal{E}$, let $X_E$ be a random variable that denotes the number of vertices in $E$ whose color is not unique in $E$. Thus, for any vertex $v$ in $E$, we have
\begin{align*}
Pr[\text{color of $v$ is not unique in $E$}] & \le 1 - \Bigg(1 - \frac{1}{32\beta}\Bigg)^{|E|-1} \leq 1 - \Bigg(1 - \frac{|E|-1}{32\beta}\Bigg) \leq \frac{|E|}{32\beta}. 
\end{align*}
Then, by linearity of expectation,
\begin{align*}
E[X_E] & \le \frac{|E|^2}{32\beta} \leq \frac{|E|}{32}. 
\end{align*}
We claim that the random variable $X_E$ satisfies the assumptions of Theorem \ref{thm_Talagrand} (Talagrand's Inequality) with $n=|E|$, $a=2$, and $b=2$. The value of $X_E$ is determined by $|E|$ independent trials. Changing the outcome of any trial can affect $X_E$ by at most $2$. For any $s>0$, if it is given that $X_E \geq s$, then there is a set of at most $2s$ trials whose outcomes would ensure that $X_E$ is at least $s$, regardless of the outcomes of the remaining trials. This proves our claim. Let $A = \frac{|E|}{2} + 20a\sqrt{bE[X]}  + 64a^2b = \frac{|E|}{2} + 40\sqrt{2E[X]}  + 512$. Applying Theorem \ref{thm_Talagrand} with $t = |E|/2$, we get 
\begin{eqnarray*}
Pr[|X_E - E[X_E]| > A ] & \leq & 4e^{-\frac{|E|^2/4}{64(E[X_E] + |E|/2)}} \\
 & \leq & 4e^{-\frac{|E|^2}{256(17|E|/32)}}~~(\mbox{since }E[X_E] \leq \frac{|E|}{32}) \\
 & \leq & 4e^{-\frac{|E|}{136}} \\
 & \le & 4e^{-\ln (16 \Gamma)}~~(\mbox{since } |E| \geq \alpha \geq \lceil 136 \ln(16 \Gamma)\rceil) \\  
 & = & \frac{1}{4\Gamma}.
\end{eqnarray*}
Since $E[X_E] \leq \frac{|E|}{32}$, we have 
$$E[X_E] + A \leq \frac{17|E|}{32} + 40\sqrt{\frac{2|E|}{32}} + 512 = \frac{17|E|}{32} + 10\sqrt{|E|} + 512 < \frac{7|E|}{8},$$ 
where the last inequality follows from the fact that $|E| \geq \alpha \geq 2^{12}$. Thus, 
\begin{eqnarray*}
Pr\Bigg[X_E \ge \frac{7|E|}{8}\Bigg] & = & Pr\Bigg[X_E - \frac{7|E|}{8} \ge 0\Bigg] \\
 & \leq & Pr[X_E - ({E[X_E] + A}) > 0] \\
 & = & Pr[(X_E - E[X_E]) > A] \\
 & \leq &  Pr[|X_E - E[X_E]| > A] \\  
 & \leq & \frac{1}{4\Gamma}.
\end{eqnarray*}
Let $A_E$ denote the bad event that $X_E \ge \frac{7|E|}{8}$. From the above calculations, we know that $Pr[A_E] \leq \frac{1}{4\Gamma}$. 
We can apply the Local Lemma (Lemma \ref{lem:local}) on the events $A_E$, for all hyperedges $E \in \mathcal E$.
Since each hyperedge intersects with at most $\Gamma$ other hyperedges, and $4\cdot \frac{1}{4\Gamma}\cdot \Gamma \leq 1$,  
we get $Pr[\cap_{E \in \mathcal{E}}(\overline A_E)] > 0$. 
Thus, $\mathcal{H}$ admits an $\mathcal{L}$-coloring for any $32\beta$-assignment $\mathcal{L}$ in which every hyperedge $E$ in $\mathcal{H}$ sees at least $\frac{|E|}{8}$ unique colors.
\end{proof}

We show an upper bound for the CFCN$^*$ choice number for
$K_{1,k}$-free graphs.

\begin{theorem} 
\label{claw free graph} 
Let $G$ be a $K_{1,k}$-free graph with maximum degree $\Delta$. Then $ch^*_{CN}(G) = O(k\ln \Delta)$. 
\end{theorem}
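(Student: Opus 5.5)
The plan is to follow the scheme of Bhyravarapu et al.\ for $\chi^*_{CN}$, replacing each coloring step by a list version, and using Lemma~\ref{lem_near_uniform_hypergraph} and Theorem~\ref{max degree + 1} as black boxes. Fix a $ck\ln\Delta$-assignment $\mathcal{L}$, with $c$ a large constant, and split each list into disjoint sublists reserved for the different phases. If $\Delta$ is bounded by a constant we simply invoke Theorem~\ref{thm: general cfcn upper bound}. So assume $\Delta$ is large.

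\emph{Step 1 (sparsification to a near-uniform hypergraph).} Choose a random subset $S\subseteq V$ by including each vertex independently with probability $p=\Theta(\ln\Delta/\Delta)$. Call a vertex $v$ \emph{low} if $\deg(v)\le \Delta/\ln\Delta$, say, and \emph{high} otherwise. For a high vertex, $\mathbb{E}|N[v]\cap S|=\Theta(\ln\Delta)$ up to the degree range. A cleaner approach is to bucket vertices by degree into dyadic classes $D_i=\{v: 2^{i}\le \deg(v)<2^{i+1}\}$, and for each class $i$ sample $S_i$ with probability $p_i=\Theta(\ln\Delta/2^i)$. For $v\in D_i$, Chernoff bounds and the Local Lemma (dependencies are bounded by $\Delta^{O(1)}$) give $|N[v]\cap S_i|\in[\alpha,\beta]$ with $\alpha,\beta=\Theta(\ln\Delta)$, which satisfies the hypothesis $\alpha\ge 136\ln(16\Gamma)$ with $\Gamma\le\Delta^{2}$ once the constant in $p_i$ is large. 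Only $O(\ln\Delta)$ classes exist, so handling them separately would cost $O(\ln^2\Delta)$ colors. The claw-freeness has to be used precisely here, to avoid paying a factor $\ln\Delta$ for the classes.

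\emph{Step 2 (using $K_{1,k}$-freeness).} The key structural fact is that in a $K_{1,k}$-free graph, $N(v)$ has independence number $<k$, so $N(v)$ is covered by fewer than $k$ ``cliques-like'' parts: a greedy maximal independent set $I_v\subseteq N(v)$, $|I_v|\le k-1$, dominates $N(v)$. I would select a single random set $S$ (with one probability, e.g.\ chosen via a maximal independent set $I$ of $G$ rather than random sampling), and aim to make every vertex $v$ have between $1$ and $O(k)$ neighbors in an independent dominating set $I$: each $v$ has $|N[v]\cap I|\le k-1$ by claw-freeness, and at least $1$ by domination. Then form the hypergraph $\mathcal{H}'$ on $I$ with edges $N[v]\cap I$. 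Its edges have size at most $k-1$ and each vertex $u\in I$ lies in at most $\Delta+1$ edges, which is too large for Theorem~\ref{max degree + 1}. So I would combine the two ideas. Edges of $\mathcal{H}'$ of size $\ge\alpha$ can be handled by Lemma~\ref{lem_near_uniform_hypergraph} with $\beta=O(k)$ (only when $k\gtrsim\ln\Delta$), giving a unique color with $32\beta=O(k)$ colors. Small edges, of size $<\alpha=O(\ln\Delta)$, get their unique color by coloring $I$ randomly from $O(k\ln\Delta)$-sized sublists and applying the Local Lemma: the probability that an edge of size $s\le k$ has no unique color when colors come from lists of size $m$ is at most $(s/m)^{\Omega(1)}$. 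That is too weak, so instead I would color a random subset with geometrically varying probabilities over $O(\ln\Delta)$ rounds, each round using $O(k)$ fresh colors.

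\emph{Main obstacle.} The crux is getting failure probability $\Delta^{-\Omega(1)}$ for each vertex while spending only $O(k\ln\Delta)$ colors in total, that is, $O(k)$ colors per one of $O(\ln\Delta)$ scales. I would try the following. In round $j$, each vertex of $I$ is activated with probability $2^{-j}$ and activated vertices receive a random color from an $O(k)$-sized fresh sublist. A vertex $v$ whose relevant set has size roughly $2^j$ then has a constant probability of exactly one activated vertex (bounded by $k$ via claw-freeness), and that vertex's color is unique among the at most $k$ colored neighbors with constant probability. Over the $\Theta(\ln\Delta)$ rounds near the right scale these events are independent, which gives failure probability $\Delta^{-\Omega(1)}$, and the Local Lemma finishes. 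Care is needed so that colored vertices of other rounds do not destroy uniqueness. Since the sublists are disjoint across rounds, a color from round $j$ can only clash within round $j$, so it suffices that the unique color in round $j$ is unique among all colored neighbors in that round, which claw-freeness bounds by $O(k)$ in expectation at the right scale.
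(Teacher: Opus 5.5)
Your proposal has a genuine gap. The ``main obstacle'' you name is exactly where the proof has to happen, and neither of your mechanisms gets past it.

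First, any scheme in which vertices take their unique color only from a maximal independent set $I$ cannot work, however it is randomized. This covers your $\mathcal{H}'$ and also your rounds, which activate only vertices of $I$. Since $N[u]\cap I=\{u\}$ for $u\in I$, every vertex of $I$ must be colored. Moreover, two vertices of $I$ that are the only $I$-neighbors of some $v$ must receive distinct colors. In the claw-free graph $L(K_n)$ with $n$ even, every maximal independent set is a perfect matching $M$. For any two edges $ab,cd\in M$, the vertex $ac$ has exactly these two neighbors in $M$. Hence all $n/2=\Omega(\Delta)$ vertices of $I$ need pairwise distinct colors.

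Second, the multi-scale rounds do not give failure probability $\Delta^{-\Omega(1)}$. If the relevant set is $N[v]\cap I$, it has at most $k-1$ elements, so there is no scale $2^j$ to match. If it is all of $N[v]$, claw-freeness no longer bounds the number of activated neighbors by $k$. Moreover, for fixed $k$ a vertex of degree $d$ has a non-negligible chance of success only in the $O(1)$ rounds with $2^j=\Theta(d)$, and the success probabilities of all other rounds sum to $O(1)$. So its failure probability is bounded below by a constant, and the Local Lemma with dependency degree $\Delta^{O(1)}$ does not apply. Repairing this with $\Theta(\ln\Delta)$ independent repetitions per scale costs $\Theta(k\ln^2\Delta)$ colors, which is the barrier you identified in Step~1.

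Third, in the list setting, splitting each list into disjoint per-round sublists does not prevent cross-round clashes. Each adversarial list is split separately, so the round-$j$ part of $L_u$ and the round-$j'$ part of $L_{u'}$ may share colors.

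The missing idea is a deterministic layering that makes the relevant hyperedges near-uniform, so that a single application of Lemma~\ref{lem_near_uniform_hypergraph} suffices. The paper first disposes of the case $k>\frac{\ln\Delta}{8}$ via Theorem~\ref{thm: general cfcn upper bound}. It then takes a maximal independent set $A$ and greedily properly colors $G-A$ into classes $S_1,\ldots,S_s$. A vertex of $S_i$ has a neighbor in every $S_j$ with $j<i$. Since each $S_j$ is independent and $G$ is $K_{1,k}$-free, every vertex has at most $k-1$ neighbors in $S_j$.

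Set $B=S_1\cup\cdots\cup S_b$ with $b=\Theta(\ln\Delta)$, and let $C$ be the remaining classes. Every $v\in C$ then has between $b$ and $(k-1)b$ neighbors in $B$. So Lemma~\ref{lem_near_uniform_hypergraph} (with $\Gamma\le\Delta^2$) colors $B$ from lists of size $O(k\ln\Delta)$ so that each $v\in C$ sees at least $b/8>k-1$ unique colors in $B$. That is enough to survive its at most $k-1$ colored neighbors in $A$.

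Crucially, only the vertices of $A\cup B$ look to $A$. The hypergraph on $A$ with edges $N[v]\cap A$, $v\in A\cup B$, has maximum degree at most $(k-1)b+1$, because each $a\in A$ has at most $(k-1)b$ neighbors in $B$. Theorem~\ref{max degree + 1} therefore handles it with $O(k\ln\Delta)$ colors, and this is exactly how the $L(K_n)$ obstruction is avoided.

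Conflicts between the two phases are handled not by reserved palettes but by deletion. From each list in $B$, the paper removes the at most $(k-1)b+1$ colors that would destroy a unique color already secured for a vertex of $A\cup B$.
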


\begin{proof}
Suppose $k > \frac{\ln \Delta}{8}$.  Then by Theorem \ref{thm: general cfcn upper bound}, $ch^*_{CN}(G) = O(k \ln \Delta)$. For the rest of the proof, we shall assume that $k \le \frac{\ln \Delta}{8}$.
Construct a maximal independent set $A$ of $G$. We remove all the vertices that belong to $A$ from $G$ to obtain a new graph $G'$.

\begin{observation}
\label{obs: nbr in A}
    Since $A$ is a maximal independent set in $G$, every vertex in $G'$ has some neighbor in $A$. Further, since $G$ is $K_{1, k}$-free, every vertex in $G'$ has at most $k-1$ neighbors in $A$.
\end{observation}

We consider a greedy, proper coloring of $G'$ using $s$ colors, where $1 \leq s \leq \Delta +1$, as described below. 
Vertices are colored one by one, and each vertex is given the smallest available color that is not used by any of its colored neighbors so far. Let $S_1, S_2, \dots, S_s$ be the color classes given by this coloring.

\begin{observation}
\label{obs: nbr in color class s}
Let $2 \leq i \leq s$. Every vertex in the color class $S_i$ has at least one neighbor in each color class $S_j$, for every $j < i$.  \end{observation}

We prove the observation by contradiction. Suppose $v \in S_i$ had no neighbor in $S_j$, for some $j < i$. Then the greedy algorithm would have given $v$ a color of value at most $j$.

\begin{observation}
\label{obs: k-1 nbrs in each s_i}
Since $G$ is $K_{1,k}$-free, every vertex in $G$ has at most $k - 1$ neighbors in each $S_i$, where $1 \le i \le s$. 
\end{observation}

We partition the color classes obtained from the above proper coloring of $G'$ into two parts, $B$ and $C$. Let $B := S_1 \cup S_2 \cup \dots \cup S_{b}$, where $b = \min \{s,  \max\{2^{12}, 272 \ln (4\Delta)\}\}$. 
Thus, $b\le s$. 
Let $C := S_{b+1} \cup \cdots \cup S_s$. Suppose $b=s$. Then, $C = \emptyset$ which makes the proof easier as we won't need to find uniquely colored neighbors for the vertices in $C$. Further, when $C= \emptyset$, our proof will go through as we do not color any vertex in $C$ in our proof and therefore no vertex will have its uniquely colored neighbor in $C$. For the rest of the proof, we assume that $b<s$ and therefore (i) $b = \max\{2^{12}, 272 \ln (4\Delta)\}$, and (ii) $C \neq \emptyset$.  
We make the following observation which follows from Observations \ref{obs: nbr in color class s} and \ref{obs: k-1 nbrs in each s_i}.

\begin{observation}
 \label{obs: nbr in B}  
 Every vertex in $C$ has at least $b$ neighbors in $B$. Moreover, for every vertex $v\in V(G)$, the number of neighbors of $v$ that belongs to $B$ is at most $(k-1)b$.
\end{observation}

Let $\mathcal{L} = \{L_v~:~v \in V(G) \}$ be an $r$-assignment for $G$ given by the adversary, where $r = 2^{18} k \ln \Delta$.  We obtain the desired $\mathcal{L}$-CFCN$^*$-coloring of $G$ by list CF$^*$ coloring two hypergraphs $\mathcal{H}_1$ and $\mathcal{H}_2$ that are defined below.

\begin{itemize}
    \item Let $\mathcal{H}_1 = (V_1, \mathcal{E}_1)$, where $V_1 = A$ and $\mathcal{E}_1 = \{N_{G}[v] \cap A : v \in (A \cup B)\}$. By Observation \ref{obs: nbr in B}, the maximum degree of $\mathcal{H}_1$ is at most $(k-1)b +1$. Thus, by Theorem \ref{max degree + 1}, we have $ch^*_{CF}(\mathcal{H}_1) \le (k-1)b + 2$. Thus $\mathcal{H}_1$ is $\mathcal{L}$-CF$^*$-colorable. 
   Let $f_1$ denote this coloring. Under $f_1$, every vertex in $A \cup B$ sees a unique color in its closed neighborhood. 
\end{itemize}

Next, by list CF$^*$ coloring a hypergraph $\mathcal{H}_2$ (defined below) having vertex set $B$, we intend to take care of every vertex in $C$. This however can lead to two problems that are described below: 
\\ 
(i) For a vertex $a \in A$, let $c_a$ denote the unique color seen by $a$ under the coloring $f_1$. We should ensure that none of the neighbors of $a$ in $B$ receive the color $c_a$. For this, we update the lists of every vertex $u \in B$ as $L'_u = L_u \setminus X_u$, where $X_u = \{c_a : a \in N_G(u) \cap A\}$. By Observation \ref{obs: nbr in A}, we have $|X_u| \le (k-1)$.
\\
(ii) For a vertex $w \in B$, let $c_w$ denote the unique color seen by $w$ under the coloring $f_1$. We should ensure that none of the closed neighbors of $w$ in $B$ receive the color $c_w$. For this, we update the lists of every vertex $u \in B$ as $L''_u = L'_u \setminus Y_u$, where $Y_u = \{c_w : w \in N_G[u] \cap B\}$. By Observation \ref{obs: nbr in B}, we have  $|Y_u| \le (k-1)(b-1) +1$. 
Thus, for every vertex $u \in B$, we remove at most $|X_u|+ |Y_u| \le (k-1) + (k-1)(b-1) +1 = (k-1)b +1$ colors from $u's$ list in total.

\begin{itemize}
    \item Thus $\mathcal{L}''= \{L_v'' ~:~ v \in V(B)\}$ is a $r'$-assignment for every vertex $v$ in $B$, where $r' = r-((k-1)b +1) \ge 2^{17} k \ln \Delta$. 
    Let $\mathcal{H}_2 = (V_2, \mathcal{E}_2)$, where $V_2 = B$ and $\mathcal{E}_2 = \{N_{G'}(v) \cap B : v \in C\}$. 
    Fix a vertex $v \in C$. Notice that $v$ shares a common neighbor with at most $\Delta^2$ other vertices in $C$. Thus, every hyperedge in $\mathcal{H}_2$ overlaps with at most $\Gamma$ other hyperedges, where $\Gamma \leq \Delta^2$. 
    
    By Observation \ref{obs: nbr in B}, any vertex $v \in C$ has at least $b$ and at most $(k-1)b$ neighbors in $B$. So for each $E \in \mathcal E_2$, we have $b \leq |E| \leq (k-1)b$. Further, $32 (k-1)b = \max \{2^{17}(k-1), 8704 (k-1)\ln (4\Delta)\} < 2^{17}k \ln \Delta \le r'$.  By applying Lemma \ref{lem_near_uniform_hypergraph} to $\mathcal H_2$, with $\alpha = b$, $\beta = (k-1)b$, and $\Gamma \leq \Delta^{2}$, 
    we get an $\mathcal{L}''$-coloring of $\mathcal{H}_2$ such that every hyperedge $E$ sees at least $\frac{|E|}{8}$ unique color.
   From the definition of $\mathcal{H}_2$, it follows that every vertex $v$ in $C$ is getting to see at least $\frac{b}{8}$ uniquely colored vertices among its neighbors in $B$.
    Since $b \ge 272  \ln(4 \Delta) \ge 2176k$ (since $k \le \frac{\ln \Delta}{8}$),  $v$ sees at least $272k$ uniquely colored vertices among its neighbors in $B$. Some of these colors may be used by a neighbor of $v$ in $A$. Since $v$ has at most $k-1$ neighbors in $A$, $v$ still sees at least $272k - (k-1)$ uniquely colored vertices in its neighborhood.
\end{itemize}

The above coloring ensures that every vertex sees a uniquely colored vertex in its closed neighborhood. In addition, each vertex in the above coloring process receives a color at most once. Some vertices, that are not part of any hypergarph, remain uncolored. 
\end{proof}

\section{Computational Results for Conflict-free Choosability}
\label{Hardness Results}
This section studies the computational complexity of the CFON$^*$/CFCN$^*$ choosability problem.

Mulzer and Rote \cite{mulzer2008minimum} showed that  \textsc{Positive Planar 1-in-3-SAT} is NP-hard. 
We define a few necessary notions before introducing \textsc{Positive Planar 1-in-3-SAT}. Given a 3-CNF Boolean formula $\phi$ on $n$ variables, say $X = \{x_1, \ldots , x_n\}$,  and $m$ clauses, say $C = \{c_1, \ldots , c_m\}$, its \emph{associated graph} $G_\phi$ is a bipartite graph on $n+m$ vertices with bipartition $\{X,C\}$ where a vertex $x_i$ is connected by an edge to a vertex $c_j$ if and only if $x_i$ or $\overline{x_i}$ is present in the clause $c_j$. Throughout this section, when we refer to $x_i$ (or $c_i$), it can either be the variable $x_i$ (resp., the clause $c_i$) in $\phi$ or the corresponding vertex $x_i$ (resp., $c_i$) in $G_\phi$. This will either be explicitely mentioned, or it will be clear from the context.   
 A Boolean formula $\phi$ is said to be \emph{planar} if its associated graph $G_\phi$ is planar. A Boolean formula $\phi$ is called \emph{positive} if every literal in $\phi$ is positive.  

\defproblem{\textsc{Positive Planar 1-in-3-SAT Problem}[see \cite{mulzer2008minimum}]}{A positive planar $3$-CNF Boolean formula $\phi$ on variables $X = \{x_1, \ldots , x_n\}$ and clauses $C = \{c_1, \ldots , c_m\}$ of the form $$\phi = c_1 \land c_2 \land \dots \land c_m,$$
where each $c_i$ is of the form $(x_{i} \lor x_{j} \lor x_{k})$.  
}{Does there exist a truth assignment to the variables such that each clause sees exactly one \textsc{True} variable?}

The only connected  graph with a CFON-choice-number equal to 1 is a path on 2 vertices. Therefore, deciding whether a
graph is 1-CFON choosable can be done in polynomial time. Similarly, the only graph with a CFCN-choice-number equal to 1 is the edgeless graph, i.e., a graph consisting solely of isolated vertices. Therefore, deciding whether a graph is 1-CFCN choosable can be done in polynomial time. 

Abel et al.~\cite{abel2018conflict} proved that it is NP-complete to decide whether  
(i) a planar bipartite graph is $1$-CFON$^{*}$-colorable, and  
(ii) a planar bipartite graph is $1$-CFCN$^{*}$-colorable. Below we prove a proposition that equates the notion of $1$-CFON$^{*}$ (resp. $1$-CFCN$^{*}$) colorability with $1$-CFON$^{*}$ (resp. $1$-CFCN$^{*}$) choosability.
  
\begin{proposition}
\label{prop: 1 coloring choosability same}
\textnormal{
 (i) A graph $G$ is $1$-CFON$^{*}$-colorable if and only if $G$ is $1$-CFON$^{*}$-choosable.\\
(ii) A graph $G$ is $1$-CFCN$^{*}$-colorable if and only if $G$ is $1$-CFCN$^{*}$-choosable.}
\end{proposition}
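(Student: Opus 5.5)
Both parts go through in exactly the same way, so we argue for (i) and then note that (ii) is identical with $N_G[v]$ in place of $N_G(v)$. One direction is immediate. The special assignment $\mathcal{L}^{[1]}$ is a $1$-assignment. So if $G$ is $1$-CFON$^*$-choosable, then in particular $G$ is $\mathcal{L}^{[1]}$-CFON$^*$-colorable, which by definition means $G$ is $1$-CFON$^*$-colorable. This is also the first inequality of Proposition~\ref{prop : classical vs full} specialised to $\mathcal{H}_{(G)}$.

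For the converse, fix a $1$-CFON$^*$ coloring $f: V' \rightarrow \{1\}$, where $V' \subseteq V(G)$. The only information $f$ carries is the set $V'$, and the condition says that every $v \in V(G)$ satisfies $|N_G(v) \cap V'| = 1$. Now let $\mathcal{L} = \{L_v = \{c_v\}\}$ be an arbitrary $1$-assignment. Define $g$ on the same set $V'$ by $g(u) = c_u$. This is a valid list coloring, since $g(u) \in L_u$ for every $u \in V'$. For each vertex $v$, exactly one vertex $u \in N_G(v)$ is colored under $g$. Its color $c_u$ therefore appears exactly once in $N_G(v)$, regardless of which colors the lists contain. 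Hence $G$ is $\mathcal{L}$-CFON$^*$-colorable for every $1$-assignment, so $G$ is $1$-CFON$^*$-choosable.

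Part (ii) follows by the same two arguments with $N_G(v)$ replaced by $N_G[v]$: a $1$-CFCN$^*$ coloring is exactly a set $V'$ with $|N_G[v] \cap V'| = 1$ for all $v$, and recoloring $V'$ from arbitrary singleton lists preserves this property.

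The one point needing care is the forward direction of the characterisation. We must check that with only one color available, a unique color in a neighborhood means exactly one colored vertex there. This holds because if two or more neighbors were colored they would all share color $1$, and if none were colored no color would appear. Once this is noted, the argument involves no further obstacle.
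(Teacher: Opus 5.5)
Your proposal is correct and follows essentially the same approach as the paper: the uniform assignment $L_v=\{R\}$ (your $\mathcal{L}^{[1]}$) gives choosable $\Rightarrow$ colorable, and recoloring the same colored set $V'$ from arbitrary singleton lists gives the converse. Your explicit justification, that with one color a unique color in a neighborhood means exactly one colored vertex there, is a step the paper leaves to the reader.
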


\begin{proof}
We prove~(i); the proof of~(ii) is analogous.
First, suppose that $G$ is $1$-CFON$^{*}$-colorable. We show that $G$ is $1$-CFON$^{*}$-choosable. Let $S \subseteq V(G)$ be the set of vertices that receive a color in a $1$-CFON$^{*}$-coloring of $G$. Let $\mathcal{L} = \{L_v : v \in V(G)\}$ be a $1$-assignment for $G$. We color each vertex in $S$ with the only color in its list. This produces a valid $\mathcal{L}$-CFON$^{*}$-coloring of $G$.

Conversely, suppose that $G$ is $1$-CFON$^{*}$-choosable. We show that $G$ is $1$-CFON$^{*}$-colorable. Let $\mathcal{L} = \{L_v : v \in V(G)\}$ be a $1$-assignment where $L_v = \{R\}$ for every vertex $v \in V(G)$. Let $Q \subseteq V(G)$ be the set of vertices that receive color $R$ under an $\mathcal{L}$-CFON$^*$-coloring of $G$. Note that the coloring obtained is indeed a valid $1$-CFON$^{*}$-coloring of $G$.
\end{proof}

Thus, by Proposition~\ref{prop: 1 coloring choosability same} and Theorems 1.3 and 6.5 in Abel et. al. \cite{abel2018conflict}, we conclude that  
(i) the \textsc{$1$-CFON$^{*}$-choosability problem} on planar bipartite graphs is NP-complete, and  
(ii) the \textsc{$1$-CFCN$^{*}$-choosability problem} on planar bipartite graphs is NP-complete.
We provide alternate proofs to these problems.
We present a significantly simpler construction than the one used by Abel et al.~\cite{abel2018conflict}.

\begin{lemma}
\label{lem:1CFON*-NP}
\textsc{1-CFON$^*$-CH PROBLEM} is in NP. 
\end{lemma}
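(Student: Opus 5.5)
The plan is to reduce membership to the colorability question using Proposition~\ref{prop: 1 coloring choosability same}(i), and then give a short certificate. By that proposition, $G$ is $1$-CFON$^*$-choosable exactly when it is $1$-CFON$^*$-colorable, i.e.\ when the all-$\{1\}$ list assignment admits a valid partial coloring. So instead of quantifying over all $1$-assignments (a universal statement that would otherwise place the problem in a higher level of the hierarchy), it suffices to exhibit a single witness.

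\textbf{Certificate.} A subset $S \subseteq V(G)$: the set of vertices colored $1$ (the rest stay uncolored). This has size polynomial in $|V(G)|$.

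\textbf{Verifier.} For each vertex $v$, count $|N_G(v) \cap S|$ and accept iff the count equals exactly $1$ for every $v$. With a single color, the only way $v$ can see a unique color in its open neighborhood is to have exactly one colored neighbor, so the condition is correct. The check takes $O(|V|+|E|)$ time. For instances where $k$ is part of the input, we handle only $k=1$ here, as the problem name indicates.

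\textbf{Correctness.} If $G$ is $1$-CFON$^*$-choosable, then by Proposition~\ref{prop: 1 coloring choosability same} it is $1$-CFON$^*$-colorable, and the set of colored vertices in such a coloring is an accepted certificate. Conversely, an accepted $S$ gives a $1$-CFON$^*$-coloring, hence choosability by the same proposition. The only real obstacle is the apparent universal quantifier over list assignments, and Proposition~\ref{prop: 1 coloring choosability same} removes it. (Directly: for any $1$-assignment, color the vertices of $S$ with their unique list colors. Each vertex $v$ then has exactly one colored neighbor, so its color appears exactly once in $N_G(v)$.)
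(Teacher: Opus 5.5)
Your proof is correct and follows essentially the paper's approach. The paper's certificate is a perfect induced matching dominating set, i.e.\ exactly a set $S$ with $|N_G(v)\cap S|=1$ for every $v$ (its extra requirement that $G[S]$ be a matching already follows from this), and your explicit use of Proposition~\ref{prop: 1 coloring choosability same}(i), together with the direct argument for arbitrary $1$-assignments, spells out the equivalence the paper leaves to the reader.
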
 
\begin{proof}
Given a graph $G$, a subset $S$ of vertices of $G$ is a \emph{Perfect Induced Matching Dominating Set} (PIMDS) for $G$ if (i) the subgraph of $G$ induced by $S$ is a matching, and (ii) every vertex in $G$ has exactly one neighbor in $S$. It can be verified that $G$ is  1-CFON$^*$-choosable if and only if $G$ contains a PIMDS, a property which can be verified in polynomial time.       
\end{proof} 

\begin{theorem}
\label{thm 1 cfon* chhosability}
\textsc{1-CFON$^*$-choosability problem} is NP-hard on planar bipartite graphs.    
\end{theorem}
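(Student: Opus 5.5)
The plan is to reduce from \textsc{Positive Planar 1-in-3-SAT}, which is NP-hard by Mulzer and Rote \cite{mulzer2008minimum}. I will use the characterization from the proof of Lemma~\ref{lem:1CFON*-NP}: $G$ is 1-CFON$^*$-choosable if and only if $G$ has a set $S$ such that every vertex of $G$ has exactly one neighbor in $S$. By Proposition~\ref{prop: 1 coloring choosability same}, the colored set of any 1-CFON$^*$-coloring is such a set, and conversely. The induced-matching condition in a PIMDS is then automatic, since each vertex of $S$ also has exactly one neighbor in $S$.

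\textbf{Construction.} Given $\phi$, I start from the associated graph $G_\phi$, which is planar and bipartite with parts $X$ and $C$. For every variable vertex $x_i$, I add two new vertices $p_i,q_i$ and the edges $x_ip_i$ and $p_iq_i$, so that each $x_i$ carries a pendant path $x_i p_i q_i$. Call the resulting graph $G$.
\begin{itemize}
\item $G$ is planar, because attaching pendant paths preserves planarity.
\item $G$ is bipartite, with parts $X\cup\{p_i\}$ replaced appropriately: put $p_i$ on the $C$ side and $q_i$ on the $X$ side.
\item The construction is clearly polynomial.
\end{itemize}

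\textbf{Forced structure.} Suppose $S$ is such that every vertex of $G$ has exactly one neighbor in $S$.
\begin{enumerate}
\item Since $q_i$ has the single neighbor $p_i$, we must have $p_i\in S$ for all $i$.
\item Next, $x_i$ already has the neighbor $p_i\in S$, so no clause vertex adjacent to $x_i$ can lie in $S$. Every clause contains some variable, so $C\cap S=\emptyset$.
\item The vertex $p_i$ has neighbors $x_i$ and $q_i$, so exactly one of them lies in $S$.
\item Each clause vertex $c_j$ has neighbors exactly its three variable vertices, so exactly one of them is in $S$.
\end{enumerate}
Setting $x_i$ to \textsc{True} if and only if $x_i\in S$ therefore gives a 1-in-3 satisfying assignment.

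\textbf{Converse.} Given a 1-in-3 satisfying assignment, take
\[
S=\{p_i\}_{i}\cup\{x_i : x_i \text{ true}\}\cup\{q_i : x_i \text{ false}\}.
\]
I then check each vertex type has exactly one neighbor in $S$:
\begin{itemize}
\item $q_i$ sees only $p_i$.
\item $p_i$ sees exactly one of $x_i,q_i$.
\item $x_i$ sees $p_i$ and no clause vertex, since $C\cap S=\emptyset$.
\item $c_j$ sees exactly its one true variable.
\end{itemize}

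\textbf{Main obstacle.} The only delicate point is making the variable gadget force clause vertices out of $S$ while still leaving a free binary choice at $x_i$. The forced vertex $p_i$ handles both. Beyond that, I only need the routine checks of planarity and bipartiteness, and the degenerate case of a variable occurring in no clause, which causes no problem.
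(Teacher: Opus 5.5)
Your proposal is correct and is essentially the paper's proof. It uses the same gadget (a pendant path $x_i p_i q_i$ on each variable vertex of $G_\phi$), the same colored set $\{p_i\}\cup\{x_i : x_i \text{ true}\}\cup\{q_i : x_i \text{ false}\}$ when starting from a 1-in-3 assignment, and the same forced-structure argument under the constant 1-assignment for the other direction, which you spell out more carefully than the paper does. Your bipartiteness sentence is garbled, but the intended parts $X\cup\{q_i\}$ and $C\cup\{p_i\}$ are correct.
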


\begin{proof}
We give a polynomial-time reduction from the \textsc{Positive Planar 1-in-3-SAT Problem}. 

\noindent\textbf{Construction:} From an input 3-CNF Boolean formula $\phi$ on a variable set $X = \{x_1, \ldots, x_n\}$ and a clause set $C = \{c_1, \ldots, c_m\}$ for the \textsc{Positive Planar 1-in-3-SAT Problem}, we construct its associated graph $G_\phi$ which is known to be planar and bipartite. We modify $G_\phi$ as described below. For each $i \in \{1, \ldots, n\}$, we attach a distinct path on $2$ vertices to $x_i$ (thus forming an induced path on $3$ vertices called the \emph{variable gadget of $x_i$}). The resultant planar bipartite graph is called $G'_\phi$. It has $m+3n$ vertices. 
Figure \ref{fig 1 on*} shows the graph $G'_{\phi}$, where $\phi = (x_1 \lor x_2 \lor x_3) \land (x_1 \lor x_2 \lor x_5) \land (x_1 \lor x_3 \lor x_5) \land (x_3 \lor x_4 \lor x_5)$.

\begin{figure}[!h]
\centering
\begin{tikzpicture}
 \node[shape=circle,draw=black, thin, minimum size = 0.2cm, inner sep=0pt] (1) at (0,0) {};
 \node[shape=circle,draw=black, fill=black, thin, minimum size = 0.2cm, inner sep=0pt] (2) at (1,0) {};
 \node[shape=circle,draw=black, fill=black, thin, minimum size = 0.2cm, inner sep=0pt] (3) at (2,0) {};

  \node[shape=circle,draw=black, fill=black, thin, minimum size = 0.2cm, inner sep=0pt] (1a) at (3,0) {};
 \node[shape=circle,draw=black, fill=black, thin, minimum size = 0.2cm, inner sep=0pt] (2a) at (4,0) {};
 \node[shape=circle,draw=black, thin, minimum size = 0.2cm, inner sep=0pt] (3a) at (5,0) {};

  \node[shape=circle,draw=black, fill=black, thin, minimum size = 0.2cm, inner sep=0pt] (1b) at (6,0) {};
 \node[shape=circle,draw=black, fill=black, thin, minimum size = 0.2cm, inner sep=0pt] (2b) at (7,0) {};
 \node[shape=circle,draw=black, thin, minimum size = 0.2cm, inner sep=0pt] (3b) at (8,0) {};

  \node[shape=circle,draw=black, thin, minimum size = 0.2cm, inner sep=0pt] (1c) at (9,0) {};
 \node[shape=circle,draw=black, fill=black, thin, minimum size = 0.2cm, inner sep=0pt] (2c) at (10,0) {};
 \node[shape=circle,draw=black, fill=black, thin, minimum size = 0.2cm, inner sep=0pt] (3c) at (11,0) {};

  \node[shape=circle,draw=black, fill=black, thin, minimum size = 0.2cm, inner sep=0pt] (1d) at (12,0) {};
 \node[shape=circle,draw=black, fill=black, thin, minimum size = 0.2cm, inner sep=0pt] (2d) at (13,0) {};
 \node[shape=circle,draw=black, thin, minimum size = 0.2cm, inner sep=0pt] (3d) at (14,0) {};

   \path [-](1) edge node[left] {} (2);
   \path [-](2) edge node[left] {} (3); 
   \path [-](1a) edge node[left] {} (2a);
   \path [-](2a) edge node[left] {} (3a); 
   \path [-](1b) edge node[left] {} (2b);
   \path [-](2b) edge node[left] {} (3b);
   \path [-](1c) edge node[left] {} (2c);
   \path [-](2c) edge node[left] {} (3c);
   \path [-](1d) edge node[left] {} (2d);
   \path [-](2d) edge node[left] {} (3d);
   
   \node [black, below] at (2, -0.1) {$x_1$};
   \node [black, below] at (5, -0.1) {$x_2$};
   \node [black, below] at (8, -0.1) {$x_3$};
   \node [black, below] at (11, -0.1) {$x_4$};
   \node [black, below] at (14, -0.1) {$x_5$};

  \node[shape=circle,draw=black, thin, minimum size = 0.2cm, inner sep=0pt] (c1) at (3,1) {};
  \node[shape=circle,draw=black, thin, minimum size = 0.2cm, inner sep=0pt] (c2) at (6.5,-2) {};
  \node[shape=circle,draw=black, thin, minimum size = 0.2cm, inner sep=0pt] (c3) at (10,2) {};
  \node[shape=circle,draw=black, thin, minimum size = 0.2cm, inner sep=0pt] (c4) at (10,-1) {};

  \node [black, above] at (3, 1.1) {$c_1$};
  \node [black, below] at (6.5, -2.1) {$c_2$};
  \node [black, above] at (10, 2.1) {$c_3$};
  \node [black, below] at (10, -1.1) {$c_4$};

   \draw [-](c1) to [] (3);
   \draw [-](c1) to [] (3a);
   \draw [-](c1)  to [] (3b);
   \draw [-](c2)  to [] (3);
   \draw [-](c2)  to []  (3a);
   \draw [-](c2)  to [out=0, in=320]  (3d);
   \draw [-](c3)  to [out=180, in=120] (3);
   \draw [-](c3)  to [] (3d);
   \draw [-](c3)  to [] (3b);
   \draw [-](c4)  to []  (3b);
   \draw [-](c4) to [] (3c);
   \draw [-](c4)  to []  (3d);
   
\end{tikzpicture}
\caption{The graph $G'_{\phi}$, where $\phi= $  
$(x_1 \lor x_2 \lor x_3) \land (x_1 \lor x_2 \lor x_5) \land (x_1 \lor x_3 \lor x_5) \land (x_3 \lor x_4 \lor x_5)$. $\phi$ is satisfiable by setting $x_1, x_4$ to be true. The black vertices will receive a color from their respective lists in any valid CFON$^*$-coloring of the graph.}
\label{fig 1 on*}
\end{figure}

\begin{claim}
\label{claim 1 cfon* chhosability}
The formula $\phi$ is a yes instance of the \textsc{Positive Planar 1-in-3-SAT Problem} if and only if $G'_{\phi}$ is 1-CFON$^*$-choosable.    
\end{claim}
\noindent \textbf{Proof of claim.}
Suppose $\phi$ is a yes instance, that is, there is an assignment of truth values to its variables where each clause sees exactly one \textsc{True} variable and two \textsc{False} variables. Let $\mathcal{L}= \{L_v ~:~ v \in V(G'_{\phi})\}$ be a $1$-assignment for $G'_{\phi}$. Let $i \in \{1, \ldots n\}$. If $x_i=\textsc{True}$, then $\mathcal{L}$-color the vertex $x_i$ and its only neighbor in the variable gadget of $x_i$. Otherwise, leave $x_i$ uncolored and $\mathcal{L}$-color the other two vertices in the variable gadget of $x_i$. The reader may verify that, under this coloring, every vertex has exactly one vertex in its open neighborhood that is colored.  

Suppose, $G'_{\phi}$ is 1-CFON$^*$-choosable. Let $\mathcal{L}= \{L_v ~:~ v \in V(G'_{\phi})\}$ be a $1$-assignment for $G'_{\phi}$, where $L_v = \{c\}$, for every $v \in  V(G'_{\phi})$. Consider an $\mathcal{L}$-CFON$^*$ coloring $f$ of $G'_{\phi}$. We observe that, under the coloring $f$, the variable gadget for each variable $x_i$ has exactly two vertices colored; either the middle vertex and the vertex to its right which is $x_i$ or the middle vertex and the vertex to its left. We set $x_i = \textsc{True}$ if the vertex $x_i$ is colored under $f$. Otherwise, we set $x_i = \textsc{False}$. Since $f$ is a valid CFON$^*$ coloring of $G'_{\phi}$, every clause vertex has exactly one of its neighbors colored. Thus, under the above truth  assignment, every clause will see exactly one \textsc{True} variable.   
\end{proof}

Below, we show that the \textsc{1-CFCN$^*$-CH PROBLEM} is NP-hard for planar bipartite graphs (by reducing from the \textsc{Positive Planar 1-in-3-SAT Problem}). 

\begin{lemma}
\label{lem:1CFCN*-NP}
\textsc{$1$-CFCN$^*$-CH PROBLEM} is in NP. 
\end{lemma}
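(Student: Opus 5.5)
We will mirror the argument of Lemma~\ref{lem:1CFON*-NP}, replacing open neighborhoods by closed ones. First we reduce choosability to colorability. By Proposition~\ref{prop: 1 coloring choosability same}(ii), $G$ is $1$-CFCN$^*$-choosable if and only if $G$ is $1$-CFCN$^*$-colorable. So it suffices to give a polynomial-size certificate for $1$-CFCN$^*$-colorability, together with a polynomial-time verifier.

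Next we characterize $1$-CFCN$^*$-colorability combinatorially. With a single color, a coloring is just the choice of a set $S\subseteq V(G)$ of colored vertices. The unique color condition at $v$ says that the only color present appears exactly once in $N_G[v]$. Hence the condition is $|N_G[v]\cap S| = 1$ for every $v \in V(G)$. Such an $S$ is a \emph{perfect code} (efficient dominating set), and the condition can be restated as two requirements:
\begin{itemize}
\item $S$ is independent, since for adjacent $u,w\in S$ we would have $|N_G[u]\cap S|\ge 2$;
\item every vertex outside $S$ has exactly one neighbor in $S$.
\end{itemize}
We will verify the equivalence in both directions. This is immediate from the definition.

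The NP certificate is the set $S$ itself, which has size at most $|V(G)|$. The verifier computes $|N_G[v]\cap S|$ for each $v$ and checks that it equals $1$. This takes $O(|V(G)|+|E(G)|)$ time.

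We expect no real obstacle. The only point requiring care is that the certificate concerns colorability rather than all $1$-assignments. Proposition~\ref{prop: 1 coloring choosability same}(ii) covers this: given such an $S$, for any $1$-assignment we color each vertex of $S$ with its unique list color. Every $v$ then has exactly one colored vertex in $N_G[v]$, and that vertex's color is trivially unique there.
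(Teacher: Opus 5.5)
Your proposal is correct and follows the same approach as the paper. The certificate is a perfect independent dominating set $S$ (your perfect code), characterized by $|N_G[v]\cap S|=1$ for every $v$ and checked in linear time, and Proposition~\ref{prop: 1 coloring choosability same}(ii) supplies the choosability--colorability equivalence. Your write-up is also slightly more careful than the paper's, which writes the open neighborhood $N_G(v)$ where the closed neighborhood is meant and leaves the equivalence as ``it can be verified.''
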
 
\begin{proof}
Given a graph $G$, a \emph{Perfect Independent Dominating Eet (PIDS)} is a set of vertices $S \subseteq V(G)$ such that $S$ is an independent set and every vertex outside $S$ has exactly one neighbor in $S$. That is, for each $v \in V(G)$, $|N_G(v) \cap S| =1$. 
It can be verified that $G$ is  1-CFCN$^*$-choosable if and only if $G$ contains a PIDS, a property which can be verified in polynomial time.  
\end{proof}

\begin{theorem}
\label{thm 1 cfcn* chhosability planar bipar}
    \textsc{$1$-CFCN$^*$-choosability problem} is NP-hard on planar bipartite graphs. 
\end{theorem}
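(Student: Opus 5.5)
I would reduce from the \textsc{Positive Planar 1-in-3-SAT Problem}, as in Theorem~\ref{thm 1 cfon* chhosability}. By Proposition~\ref{prop: 1 coloring choosability same}, and by the list assignment giving every vertex the same single color, $G$ is 1-CFCN$^*$-choosable if and only if $G$ has a set $S$ with $|N_G[v]\cap S|=1$ for every $v\in V(G)$. Such a set is automatically independent, since two adjacent vertices of $S$ would each see two vertices of $S$ in their closed neighborhoods. So the task is to build, from $\phi$, a planar bipartite graph that has such an $S$ exactly when $\phi$ has a 1-in-3 assignment.

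\textbf{Construction.} Start from $G_\phi$ and attach to each variable vertex $x_i$ a pendant path $x_i - a_i - b_i - d_i - e_i$ on four new vertices. Attaching pendant paths keeps the graph planar and bipartite, and it has $m+5n$ vertices. The intended encoding is as follows.
\begin{itemize}
\item If $x_i$ is \textsc{True}, then $x_i\in S$ and $d_i\in S$. Here $a_i$ is dominated by $x_i$, and $b_i$ and $e_i$ are dominated by $d_i$.
\item If $x_i$ is \textsc{False}, then $a_i\in S$ and $e_i\in S$. Here $x_i$ and $b_i$ are dominated by $a_i$, and $d_i$ is dominated by $e_i$.
\item No clause vertex is in $S$.
\end{itemize}
For the forward direction, take a 1-in-3 satisfying assignment and define $S$ this way. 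A clause vertex then sees exactly its unique true variable. A false variable vertex sees only $a_i$, because none of its clause neighbours lies in $S$. A true variable vertex sees only itself, because $a_i\notin S$ and no clause vertex is in $S$. Checking the path vertices is routine.

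\textbf{Converse.} Given a valid $S$, I would first analyse the gadget by cases on $x_i$.
\begin{itemize}
\item Suppose $x_i\in S$. Then $a_i$ is already dominated, so $a_i,b_i\notin S$. Vertex $b_i$ then needs $d_i\in S$, and this also dominates $e_i$.
\item Suppose $x_i\notin S$ and $a_i\notin S$. Then $a_i$ must be dominated by $b_i\in S$. This forces $d_i\notin S$, so $e_i$ must dominate itself, which gives $d_i$ two vertices of $S$. This is a contradiction.
\end{itemize}
Hence $x_i\notin S$ forces $a_i\in S$. In that case $x_i$ is already dominated by $a_i$, so no clause neighbour of $x_i$ can lie in $S$.

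Next I would show that no clause vertex $c_j$ is in $S$. If $c_j\in S$, none of its variable vertices can be in $S$ by independence. Each of them then has $a_i\in S$ as well as $c_j$ in its closed neighbourhood, which is a contradiction. So $c_j\notin S$, and $c_j$ has exactly one variable neighbour in $S$. Setting $x_i=\textsc{True}$ if and only if $x_i\in S$ therefore gives a 1-in-3 satisfying assignment.

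The main obstacle is choosing the gadget length. With shorter pendant paths (two or three vertices) the case analysis breaks: either the \textsc{True} case cannot be completed, or the \textsc{False} case double-dominates an inner vertex. The step most in need of care is the exhaustive case check on the path, since it is what forces the rigid two-state behaviour used above.
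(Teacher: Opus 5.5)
Your proof is correct and follows essentially the paper's route: a reduction from \textsc{Positive Planar 1-in-3-SAT} via $G_\phi$ with a pendant gadget on each variable vertex, setting $x_i$ to \textsc{True} exactly when $x_i\in S$. Your explicit argument that no clause vertex can lie in $S$ also fills in a step the paper uses without proof.

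The one difference is the gadget. The paper attaches a single pendant vertex $v_i$ to each $x_i$, and the closed neighbourhood $\{x_i,v_i\}$ of $v_i$ already forces exactly one of $x_i,v_i$ into $S$. So your four-vertex path is valid but unnecessary, and your closing remark that shorter pendant paths break the case analysis overlooks this one-vertex case.
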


\begin{proof}
We give a polynomial-time reduction from the \textsc{Positive Planar 1-in-3-SAT Problem}.

\noindent \textbf{Construction:} From an input 3-CNF Boolean formula $\phi$ on a variable set $X = \{x_1, \ldots, x_n\}$ and a clause set $C = \{c_1, \ldots, c_m\}$ for the \textsc{Positive Planar 1-3-SAT Problem}, we construct its associated graph $G_\phi$ which is known to be bipartite. We modify $G_\phi$ as described below. For each $i \in \{1, \ldots, n\}$, we attach a pendant vertex $v_i$ to $x_i$. The resultant bipartite graph is called $G''_\phi$. It has $m+2n$ vertices. Figure \ref{fig 1 cn*} shows the graph $G''_{\phi}$, where $\phi = (x_1 \lor x_2 \lor x_3) \land (x_1 \lor x_2 \lor x_5) \land (x_1 \lor x_3 \lor x_5) \land (x_3 \lor x_4 \lor x_5)$.

\begin{figure}[!h]
\centering
\begin{tikzpicture}

 \node[shape=circle,draw=black, thin, minimum size = 0.2cm, inner sep=0pt] (2) at (1,0) {};
 \node[shape=circle,draw=black, fill=black, thin, minimum size = 0.2cm, inner sep=0pt] (3) at (2,0) {};

 \node[shape=circle,draw=black, fill=black, thin, minimum size = 0.2cm, inner sep=0pt] (2a) at (4,0) {};
 \node[shape=circle,draw=black, thin, minimum size = 0.2cm, inner sep=0pt] (3a) at (5,0) {};

 \node[shape=circle,draw=black, fill=black, thin, minimum size = 0.2cm, inner sep=0pt] (2b) at (7,0) {};
 \node[shape=circle,draw=black, thin, minimum size = 0.2cm, inner sep=0pt] (3b) at (8,0) {};

 \node[shape=circle,draw=black, thin, minimum size = 0.2cm, inner sep=0pt] (2c) at (10,0) {};
 \node[shape=circle,draw=black, fill=black, thin, minimum size = 0.2cm, inner sep=0pt] (3c) at (11,0) {};

 \node[shape=circle,draw=black, fill=black, thin, minimum size = 0.2cm, inner sep=0pt] (2d) at (13,0) {};
 \node[shape=circle,draw=black, thin, minimum size = 0.2cm, inner sep=0pt] (3d) at (14,0) {};

   \path [-](2) edge node[left] {} (3); 
  
   \path [-](2a) edge node[left] {} (3a); 
  
   \path [-](2b) edge node[left] {} (3b);
 
   \path [-](2c) edge node[left] {} (3c);
 
   \path [-](2d) edge node[left] {} (3d);
   
   \node [black, below] at (2, -0.1) {$x_1$};
   \node [black, below] at (5, -0.1) {$x_2$};
   \node [black, below] at (8, -0.1) {$x_3$};
   \node [black, below] at (11, -0.1) {$x_4$};
   \node [black, below] at (14, -0.1) {$x_5$};

  \node[shape=circle,draw=black, thin, minimum size = 0.2cm, inner sep=0pt] (c1) at (3,1) {};
  \node[shape=circle,draw=black, thin, minimum size = 0.2cm, inner sep=0pt] (c2) at (6.5,-2) {};
  \node[shape=circle,draw=black, thin, minimum size = 0.2cm, inner sep=0pt] (c3) at (10,2) {};
  \node[shape=circle,draw=black, thin, minimum size = 0.2cm, inner sep=0pt] (c4) at (10,-1) {};

  \node [black, above] at (3, 1.1) {$c_1$};
  \node [black, below] at (6.5, -2.1) {$c_2$};
  \node [black, above] at (10, 2.1) {$c_3$};
  \node [black, below] at (10, -1.1) {$c_4$};

   \draw [-](c1) to [] (3);
   \draw [-](c1) to [] (3a);
   \draw [-](c1)  to [] (3b);
   \draw [-](c2)  to [] (3);
   \draw [-](c2)  to []  (3a);
   \draw [-](c2)  to [out=0, in=320]  (3d);
   \draw [-](c3)  to [out=180, in=120] (3);
   \draw [-](c3)  to [] (3d);
   \draw [-](c3)  to [] (3b);
   \draw [-](c4)  to []  (3b);
   \draw [-](c4) to [] (3c);
   \draw [-](c4)  to []  (3d);

    \node [black, below] at (1, -0.1) {$v_1$};
   \node [black, below] at (4, -0.1) {$v_2$};
   \node [black, below] at (7, -0.1) {$v_3$};
   \node [black, below] at (10, -0.1) {$v_4$};
   \node [black, below] at (13, -0.1) {$v_5$};
   
\end{tikzpicture}
\caption{The graph $G_{\phi}$ with four clauses 
$(x_1 \lor x_2 \lor x_3) \land (x_1 \lor x_2 \lor x_5) \land (x_1 \lor x_3 \lor x_5) \land (x_3 \lor x_4 \lor x_5)$. $\phi$ is satisfiable by setting $x_1, x_4$ to be true. The black vertices will receive a color from their respective lists in any valid CFCN$^*$-coloring of the graph.}
\label{fig 1 cn*}
\end{figure}

\begin{claim}
\label{claim 1 cfcn* chhosability planar bipar}
The formula  $\phi$ is a yes instance of the \textsc{Positive Planar 1-in-3-SAT Problem} if and only if $G''_{\phi}$ is 1-CFCN$^*$-choosable.   
\end{claim}
\noindent \textbf{Proof of claim.}
Suppose $\phi$ is a yes instance, that is, there is an assignment of truth values to its variables where each clause sees exactly one \textsc{True} variable and two \textsc{False} variables. Let $\mathcal{L}= \{L_v ~:~ v \in V(G''_{\phi})\}$ be a $1$-assignment for $G''_{\phi}$. 
Let $i \in \{1, \ldots n\}$. 
If $x_i=\textsc{True}$, then $\mathcal{L}$-color the vertex $x_i$. Otherwise, $\mathcal{L}$-color the vertex $v_i$. The reader may verify that, under this coloring, every vertex has exactly one vertex in its closed neighborhood that is colored.  

Suppose, $G''_{\phi}$ is 1-CFCN$^*$-choosable. Let $\mathcal{L}= \{L_v ~:~ v \in V(G''_{\phi})\}$ be a $1$-assignment for $G''_{\phi}$, where $L_v = \{c\}$, for every $v \in  V(G''_{\phi})$.
Consider an $\mathcal{L}$-CFCN$^*$ coloring $f$ of $G''_{\phi}$. 
We observe that, under the coloring $f$, the variable gadget for each variable $x_i$ has exactly one vertex colored; either the vertex $x_i$ or the vertex $v_i$. We set $x_i = \textsc{True}$ if the vertex $x_i$ is colored under $f$. Otherwise, we set $x_i = \textsc{False}$. Since $f$ is a valid CFCN$^*$ coloring of $G''_{\phi}$, every clause vertex has exactly one of its neighbors colored. Thus, under the above truth assignment, every clause will see exactly one \textsc{True} variable. 
\end{proof}

Now we show that the \textsc{2-CFCN$^*$-choosability problem} is NP-hard by reducing from the \textsc{1-CFCN$^*$-choosability problem}.
Below, we present a construction of a graph $H_G$ derived from a given graph $G$.
\\ \\
\noindent \textbf{Construction:} We construct $H_G$ by taking $12$ vertex-disjoint copies of $G$, denoted $G^a_{12}, G^b_{12}, G^a_{13}, G^b_{13}, G^a_{14}, G^b_{14}, G^a_{23}, G^b_{23}, G^a_{24}, G^b_{24}, G^a_{34}, G^b_{34}$, and adding four new vertices $v_1$, $v_2$, $v_3$, $v_4$. 
For every $1 \le i < j \le 4$, $z\in \{a,b\}$, we add an edge from (i) $v_i$ to every vertex in $G_{ij}^z$ and (ii) $v_j$ to every vertex in $G_{ij}^z$.
These are the only edges of $H_G$. Thus, $|E(H_G)| = 12 \cdot |E(G)| + |\{v_1, v_2, v_3, v_4\}| \cdot 6 \cdot |V(G)| = 12 \cdot |E(G)| + 24 \cdot |V(G)|$. 
Figure~\ref{fig: 2cn*} illustrates the construction of $H_G$ from $G$.

\begin{figure}[!h]
\centering
\begin{tikzpicture}

 \node[shape=circle,draw=black, thin, minimum size = 0.2cm, inner sep=0pt] (1) at (1,0) {};
 \node[shape=circle,draw=black,  thin, minimum size = 0.2cm, inner sep=0pt] (2) at (4,0) {};
 \node[shape=circle,draw=black,  thin, minimum size = 0.2cm, inner sep=0pt] (3) at (7,0) {};
 \node[shape=circle,draw=black, thin, minimum size = 0.2cm, inner sep=0pt] (4) at (10,0) {};

   \node [black, below] at (1, -0.1) {$v_1$};
   \node [black, below] at (4, -0.1) {$v_2$};
   \node [black, below] at (7, -0.1) {$v_3$};
   \node [black, below] at (10, -0.1) {$v_4$};
   
  \node[shape=rectangle,draw=black, thin, minimum size = 0.6cm, inner sep=0pt] (12a) at (0,3.2) {};
  \node[shape=rectangle,draw=black, thin, minimum size = 0.6cm, inner sep=0pt] (12b) at (1,3.2) {};
  \node[shape=rectangle,draw=black, thin, minimum size = 0.6cm, inner sep=0pt] (13a) at (2,3.2) {};
  \node[shape=rectangle,draw=black, thin, minimum size = 0.6cm, inner sep=0pt] (13b) at (3,3.2) {};
  \node[shape=rectangle,draw=black, thin, minimum size = 0.6cm, inner sep=0pt] (14a) at (4,3.2) {};
  \node[shape=rectangle,draw=black, thin, minimum size = 0.6cm, inner sep=0pt] (14b) at (5,3.2) {};
  \node[shape=rectangle,draw=black, thin, minimum size = 0.6cm, inner sep=0pt] (23a) at (6,3.2) {};
  \node[shape=rectangle,draw=black, thin, minimum size = 0.6cm, inner sep=0pt] (23b) at (7,3.2) {};
  \node[shape=rectangle,draw=black, thin, minimum size = 0.6cm, inner sep=0pt] (24a) at (8,3.2) {};
  \node[shape=rectangle,draw=black, thin, minimum size = 0.6cm, inner sep=0pt] (24b) at (9,3.2) {};
  \node[shape=rectangle,draw=black, thin, minimum size = 0.6cm, inner sep=0pt] (34a) at (10,3.2) {};
  \node[shape=rectangle,draw=black, thin, minimum size = 0.6cm, inner sep=0pt] (34b) at (11,3.2) {};
  
 \node [black, above] at (0, 3.5) {$G_{12}^a$}; 
 \node [black, above] at (1, 3.5) {$G_{12}^b$}; 
 \node [black, above] at (2, 3.5) {$G_{13}^a$}; 
 \node [black, above] at (3, 3.5) {$G_{13}^b$};
 \node [black, above] at (4, 3.5) {$G_{14}^a$};
 \node [black, above] at (5, 3.5) {$G_{14}^b$};
 \node [black, above] at (6, 3.5) {$G_{23}^a$};
 \node [black, above] at (7, 3.5) {$G_{23}^b$};
 \node [black, above] at (8, 3.5) {$G_{24}^a$};
 \node [black, above] at (9, 3.5) {$G_{24}^b$};
 \node [black, above] at (10, 3.5) {$G_{34}^a$};
 \node [black, above] at (11, 3.5) {$G_{34}^b$}; 

\path [very thick, dotted](1) edge node[left] {} (12a); 
\path [very thick, dotted](1) edge node[left] {} (12b); 
\path [very thick, dotted](1) edge node[left] {} (13a); 
\path [very thick, dotted](1) edge node[left] {} (13b); 
\path [very thick, dotted](1) edge node[left] {} (14a); 
\path [very thick, dotted](1) edge node[left] {} (14b); 

\path [very thick, dotted](2) edge node[left] {} (12a);
\path [very thick, dotted](2) edge node[left] {} (12b);
\path [very thick, dotted](2) edge node[left] {} (23a);
\path [very thick, dotted](2) edge node[left] {} (23b);
\path [very thick, dotted](2) edge node[left] {} (24a);
\path [very thick, dotted](2) edge node[left] {} (24b);

\path [very thick, dotted](3) edge node[left] {} (13a);
\path [very thick, dotted](3) edge node[left] {} (13b);
\path [very thick, dotted](3) edge node[left] {} (23a);
\path [very thick, dotted](3) edge node[left] {} (23b);
\path [very thick, dotted](3) edge node[left] {} (34a);
\path [very thick, dotted](3) edge node[left] {} (34b);

\path [very thick, dotted](4) edge node[left] {} (14a);
\path [very thick, dotted](4) edge node[left] {} (14b);
\path [very thick, dotted](4) edge node[left] {} (24a);
\path [very thick, dotted](4) edge node[left] {} (24b);
\path [very thick, dotted](4) edge node[left] {} (34a);
\path [very thick, dotted](4) edge node[left] {} (34b);

\end{tikzpicture}
\caption{The graph $H_G$. For $\ell \in [4]$, $1 \le i < j \le 4$, and $z \in \{a,b\}$, a dotted edge from $v_\ell$ to $G_{ij}^z$ indicates that $v_\ell$ is adjacent to every vertex in $G_{ij}^z$.}
\label{fig: 2cn*}
\end{figure}

\begin{lemma}
\label{lem 2 cfcn* choosability} 
A graph $G$ is $1$-CFCN$^*$-choosable if and only if the graph $H_G$ is $2$-CFCN$^*$-choosable.
\end{lemma}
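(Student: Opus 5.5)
The plan is to prove the two implications separately. Throughout I use the characterization from the proof of Lemma~\ref{lem:1CFCN*-NP}: $G$ is $1$-CFCN$^*$-choosable if and only if $G$ has a PIDS $S$, i.e.\ $|N_G[v]\cap S|=1$ for every $v\in V(G)$. In $H_G$, every vertex $u$ of a copy $G^z_{ij}$ has closed neighborhood equal to its closed neighborhood inside the copy together with $\{v_i,v_j\}$. The closed neighborhood of a hub $v_\ell$ is $v_\ell$ together with the six copies indexed by pairs containing $\ell$.

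\textbf{($\Rightarrow$)} Assume $G$ has a PIDS $S$, and let $\mathcal{L}$ be any $2$-assignment of $H_G$. In every copy I will color exactly the copy of $S$, and I will color every hub $v_\ell$ with some $c_\ell\in L_{v_\ell}$.

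\emph{Copy vertices.} A copy vertex $u$ in $G^z_{ij}$ sees exactly three colored vertices: $v_i$, $v_j$, and its unique $S$-vertex $s$. Among three colors, one is unique unless all three coincide. So $u$ is satisfied as long as $f(s)\notin\{c_i,c_j\}$ whenever $c_i=c_j$, and this can always be arranged because $|L_s|=2$.

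\emph{Hubs.} The hub $v_\ell$ is satisfied if no colored vertex in its six copies uses $c_\ell$. So for each $s\in S$ in a copy $G^z_{ij}$, I want to choose $f(s)\in L_s\setminus\{c_i,c_j\}$. This fails only when $L_s=\{c_i,c_j\}$ with $c_i\ne c_j$.

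This is the step I expect to be the main obstacle. My first attempt is to exploit the freedom in choosing the $c_\ell$, and also the option of leaving some hubs uncolored. If a hub is uncolored, it can instead be satisfied by a single $S$-vertex in one of its copies whose color is unique among all colored vertices of its neighborhood. The presence of two copies $G^a_{ij},G^b_{ij}$ per pair is presumably what makes this work: when $L_s=\{c_i,c_j\}$, I give $s$ the color of whichever hub is left unsatisfied by other means. I would then argue by a short case analysis on the multiset $\{c_1,\dots,c_4\}$ (all equal, or some two distinct) that every hub can be given a private color.

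\textbf{($\Leftarrow$)} Suppose $G$ has no PIDS. I will build a bad $2$-assignment for $H_G$: all four hubs get the list $\{1,2\}$, and all copy vertices get a common list chosen so that colors in a copy interact with the hub colors. Each hub is either uncolored, colored $1$, or colored $2$. With three statuses and four hubs, pigeonhole gives two hubs $v_i,v_j$ with the same status. Then for every vertex $u$ of $G^a_{ij}$ and $G^b_{ij}$, the hubs contribute either nothing or the same color twice, so $u$ must find its unique color inside its own copy, among colors that avoid (or are cancelled by) the repeated hub color.

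The aim is to choose the copy lists so that, in at least one of $G^a_{ij},G^b_{ij}$, the coloring restricted to the copy is forced to behave like a one-color CFCN$^*$ coloring of $G$. That would produce a PIDS, a contradiction. The second copy per pair is meant to rule out the remaining case where the hub color itself supplies uniqueness, by having the lists in $G^a_{ij}$ and $G^b_{ij}$ differ. Pinning down these lists so that the forcing really yields a single color class with $|N[v]\cap S|=1$ is the second delicate point, and I would settle it together with the case analysis of the forward direction.
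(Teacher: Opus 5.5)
Both directions stop exactly where the real argument is needed, so this is a plan with its two essential steps missing. In ($\Rightarrow$), the obstacle comes from your choice to color a copy of $S$ in all twelve copies. Then every hub sees $6|S|$ colored vertices whose lists the adversary chose. You give no argument that a hub whose color is reused by some $s$ with $L_s=\{c_i,c_j\}$, or an uncolored hub, can always get a private color; the ``short case analysis'' is never supplied.

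The missing idea is to color almost nothing inside the copies. If $v_i,v_j$ get distinct colors, every vertex of $G^a_{ij},G^b_{ij}$ already sees $c_i$ and $c_j$ once each, so these copies can stay uncolored, and the hub neighborhoods stay clean. The paper's construction is as follows:
\begin{itemize}
\item Color $v_1,v_2,v_3$ from their lists so that the three colors are not all equal (possible since lists have size $2$).
\item Leave $v_4$ uncolored and satisfy it by coloring one vertex $w\in G^a_{14}$ with a color different from $c_1$.
\item If two of $v_1,v_2,v_3$ share a color $c$, color only their two copies, using a PIDS with colors from $L_s\setminus\{c\}$. This is your own observation about copy vertices.
\end{itemize}
Each colored hub then sees its own color exactly once, and $v_4$ sees only $w$. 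Copies $G^z_{i4}$ see $c_i$ once, and every other copy vertex sees a unique hub color or its copy's unique color.

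In ($\Leftarrow$), no special lists are needed: the list $\{1,2\}$ on every vertex of $H_G$ works. Your suggestion that $G^a_{ij}$ and $G^b_{ij}$ get different lists misreads the role of the second copy. When two hubs $v_i,v_j$ share a color, one copy already suffices. Each $u\in G^a_{ij}$ sees that color twice, so the other color occurs exactly once in $N_{G^a_{ij}}[u]$, and its color class is a PIDS.

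The case you leave open is two uncolored hubs $v_i,v_j$. There the copy vertices only need a unique color inside their own copy, which with two colors gives no contradiction by itself. The contradiction comes from the hub. $v_i$ needs a color $x$ occurring exactly once in $N[v_i]$, so $x$ occurs at most once in $G^a_{ij}\cup G^b_{ij}$. Hence one of the two copies has no vertex colored $x$. In that copy every vertex's unique color must be the other color, whose color class is again a PIDS of $G$, a contradiction. That is what the duplicate copies are for.
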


\begin{proof}
Suppose $G$ is 1-CFCN$^*$-choosable. We will show that $H_G$ is 2-CFCN$^*$-choosable. Let $\mathcal{L}=\{L_v~:~ v \in V(H_G)\}$ be a 2-assignment for $H_G$. We first color the vertices $v_1, v_2$, and $v_3$ by assigning each an arbitrary color from its list, ensuring that at most two vertices receive the same color. The vertex $v_4$ remains uncolored. The vertices $v_1, v_2$, and $v_3$ see their own color as a unique color. Let $c$ be the color given to $v_1$.
For a vertex $v_4$ to see a unique color in its closed neighborhood, we color exactly one vertex $w$ in $G_{14}^a$ with a color other than color $c$ from its list. The color of the vertex $w$ serves as a unique color in the closed neighborhood of $v_4$.  We now have two cases:\\
(i) If all three vertices $v_1, v_2, v_3$ receive distinct colors, then it can be verified that every vertex in $H_G$ has a unique color in its closed neighborhood. 
We do not color the remaining vertices in $H_G$ (only four vertices in $H_G$ are colored i.e., $v_1, v_2, v_3$, and $w$). 
\\
(ii) If instead two of the vertices among $v_1, v_2, v_3$ receive the same color, then we proceed as follows. Without loss of generality, assume that $v_1$ and $v_2$ receive color $c$. We then remove the color $c$ from the list of every vertex in $G_{12}^a$ and $G_{12}^b$. Note that some vertices in $G_{12}^a$ and $G_{12}^b$ may be left with a singleton list after this removal. Since $G_{12}^a$ and $G_{12}^b$ are $1$-CFCN$^*$-choosable, $G_{12}^a$ and $G_{12}^b$ admit $1$-CFCN$^*$-coloring under the reduced lists. 
We do not color the remaining vertices in $H_G$ ($v_1, v_2, v_3, w$ and all the vertices in $G_{12}^a$ and $G_{12}^b$ are the only colored vertices in $H_G$).
This yields a valid $\mathcal{L}$-CFCN$^*$-coloring of $H_G$, where every vertex in $G_{12}^a$ and $G_{12}^b$ finds a unique color within $G_{12}^a$ and $G_{12}^b$, respectively, and every vertex in the remaining ten copies of $G$ (namely, $G^a_{13}, G^b_{13}, \dots, G^b_{34})$ has a uniquely colored neighbor in the set $\{v_1, v_2, v_3\}$. 

Suppose $G$ is not 1-CFCN$^*$-choosable. We will show that $H_G$ is not 2-CFCN$^*$-choosable. Let $\mathcal{L}=\{L_v~:~ v \in V(H)\}$ be a 2-assignment for $H_G$, where $L_v= \{R, B\}$, for every $v\in V(H_G)$. We have the following two cases:
\\
(i) Consider an $\mathcal{L}$-coloring of $H_G$ that colors at least three vertices among $v_1, v_2, v_3, v_4$. WLOG, assume $v_1, v_2$, and $v_3$ are colored. Since $L_v=\{R, B\}$, $\forall v \in V(H_G)$, at least two of the three vertices $v_1, v_2, v_3$ receive the same color. 
Without loss of generality, assume that $v_1$ and $v_2$ receive color $R$, under the $\mathcal{L}$-coloring of $H_G$. Then for $G_{12}^a$ and $G_{12}^b$ to see a unique color, color $R$ should not be assigned to any vertex in $G_{12}^a$ or $G_{12}^b$. 
After the removal of color $R$ from every vertex's list in $G_{12}^a$ and $G_{12}^b$, it is left with only color $B$ in its list. 
Since neither $G_{12}^a$ nor $G_{12}^b$ is $1$-CFCN$^*$-choosable, by Proposition \ref{prop: 1 coloring choosability same} (ii), neither $G_{12}^a$ nor $G_{12}^b$ is $1$-CFCN$^*$-colorable. Therefore $G_{12}^a$ and $G_{12}^b$ are not list CFCN$^*$-colorable under the reduced list. Thus this case does not yield a valid $2$-CFCN$^*$-coloring of $H_G$. 
\\ 
(ii) Consider an $\mathcal{L}$-coloring of $H_G$ where at least two of the vertices among $v_1, v_2, v_3, v_4$ are uncolored. Without loss of generality, assume that $v_1$ and $v_2$ are uncolored under the $\mathcal{L}$-coloring of $H_G$. Then some of the vertices in $G_{12}^a$ and $G_{12}^b$ must be colored so that every vertex in $G_{12}^a$ and $G_{12}^b$ sees a unique color in its closed neighborhood. 
For the vertex $v_1$ (or $v_2$) to see a unique color in its closed neighborhood one color should be present exactly once among the vertices in $G_{12}^a$ and $G_{12}^b$. Assume the color $R$ is given to exactly one vertex in $G_{12}^a$. Then the color $R$ cannot be given to any vertex in $G_{12}^b$. So after the removal of color $R$ from every vertex list in $G_{12}^b$, every vertex in $G_{12}^b$ is left with only one color, i. e. the color $B$.
Since $G_{12}^b$ is not $1$-CFCN$^*$-choosable, by Proposition \ref{prop: 1 coloring choosability same} (ii), $G_{12}^b$ is not $1$-CFCN$^*$-colorable. Therefore, $G_{12}^b$ is not list CFCN$^*$-colorable under the reduced list. Thus this case does not yield a valid $2$-CFCN$^*$-coloring of $H_G$. 
\end{proof}

Combining Lemma \ref{lem 2 cfcn* choosability} and Theorem \ref{thm 1 cfcn* chhosability planar bipar}, we have the following result.

\begin{theorem}
\label{thm 2 cfcn* chhosability}
    The \textsc{$2$-CFCN$^*$-choosability problem} is NP-hard.
\end{theorem}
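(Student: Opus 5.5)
The plan is a polynomial-time many-one reduction from the \textsc{$1$-CFCN$^*$-choosability problem}, which is NP-hard on planar bipartite graphs by Theorem~\ref{thm 1 cfcn* chhosability planar bipar}. Given an instance $G$ of that problem, we output the graph $H_G$ from the construction above. Lemma~\ref{lem 2 cfcn* choosability} states that $G$ is $1$-CFCN$^*$-choosable if and only if $H_G$ is $2$-CFCN$^*$-choosable. Hence $G$ is a yes instance of the $1$-CFCN$^*$ problem exactly when $(H_G,2)$ is a yes instance of the $2$-CFCN$^*$ problem.

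The one thing left to check is that the reduction is computable in polynomial time. The graph $H_G$ consists of twelve disjoint copies of $G$ together with four new vertices, so it has $12|V(G)|+4$ vertices. Its edge count is $12|E(G)|+24|V(G)|$, as computed in the construction. Writing the copies down and joining each of $v_i, v_j$ to every vertex of $G_{ij}^z$ therefore takes time linear in the size of $G$.

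Composing this reduction with the NP-hardness of the $1$-CFCN$^*$ problem gives NP-hardness of the $2$-CFCN$^*$ problem. Note that $H_G$ is not claimed to be planar: each $v_i$ is joined to whole copies of $G$. This is why the statement asserts only NP-hardness in general, not for planar bipartite graphs.

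No step here is genuinely hard, since all the combinatorial content is in Lemma~\ref{lem 2 cfcn* choosability}. The only point to be careful about is the direction of the reduction. The problem is NP-hard rather than NP-complete, because membership of $2$-choosability in NP is not claimed and is not needed for this statement.
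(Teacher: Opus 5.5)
Your proposal is correct and follows the paper's argument exactly. The paper proves this theorem by combining Lemma~\ref{lem 2 cfcn* choosability}, which says $G$ is $1$-CFCN$^*$-choosable iff $H_G$ is $2$-CFCN$^*$-choosable, with the NP-hardness in Theorem~\ref{thm 1 cfcn* chhosability planar bipar}; your check that $H_G$, with $12|V(G)|+4$ vertices and $12|E(G)|+24|V(G)|$ edges, is built in polynomial time is a detail the paper leaves implicit.
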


Now we show that the \textsc{2-CFON$^*$-choosability} problem is NP-hard on bipartite graphs by reducing from the \textsc{2-CFCN$^*$-choosability} problem.

\begin{definition}
    Given a graph $G$ with $V(G)= \{v_1, v_2, \dots, v_n\}$, the \emph{extended double cover} of $G$, denoted $D_G$, is the bipartite graph with bipartition $\{X, Y\}$, where $X=\{x_1, x_2, \dots, x_n\}$, $Y= \{y_1, y_2, \dots, y_n \}$ and $x_ix_j \in E(D_G)$ if and only if $j=i$ or $v_iv_j \in E(G)$.
\end{definition}

We now prove the following lemma.
\begin{lemma}
\label{lem 2 cfon* choosability} 
\textnormal{
(i)  A graph $G$ is $k$-CFCN$^*$-choosable if and only if its extended double cover $D_G$ is $k$-CFON$^*$-choosable, where $k \ge 1$. \\
(ii) A graph $G$ is $k$-CFCN-choosable if and only if its extended double cover $D_G$ is $k$-CFON-choosable, where $k \ge 1$.}
\end{lemma}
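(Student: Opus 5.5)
The key observation is that $D_G$ is bipartite with $N_{D_G}(x_i)=\{y_j : v_j\in N_G[v_i]\}$ and $N_{D_G}(y_j)=\{x_i : v_i\in N_G[v_j]\}$. This uses the intended definition, in which the edges of $D_G$ are $x_iy_j$ with $j=i$ or $v_iv_j\in E(G)$. Also $D_G$ has no isolated vertices, since $x_iy_i$ is always an edge. Consequently, the open-neighborhood hypergraph $\mathcal{H}_{(D_G)}$ splits into two vertex-disjoint pieces. The hyperedges $N_{D_G}(y_j)$ lie entirely in $X$, and the hyperedges $N_{D_G}(x_i)$ lie entirely in $Y$. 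Under the identification $x_i\leftrightarrow v_i$ (resp.\ $y_i\leftrightarrow v_i$), each piece is an isomorphic copy of the closed-neighborhood hypergraph $\mathcal{H}_{[G]}$. By the remark relating CFON$^*$/CFCN$^*$ colorings to CF$^*$ colorings of these hypergraphs, the whole lemma reduces to this decoupling. I will prove both directions of (i) and then note that (ii) is verbatim the same argument with ``every vertex colored''.

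For ($\Leftarrow$) in (i), assume $D_G$ is $k$-CFON$^*$-choosable and let $\mathcal{L}=\{L_{v_i}\}$ be any $k$-assignment for $G$. Define a $k$-assignment on $D_G$ by giving both $x_i$ and $y_i$ the list $L_{v_i}$, and take an $\mathcal{L}$-CFON$^*$-coloring $f$ of $D_G$. Define $g(v_i)=f(x_i)$ whenever $x_i$ is colored. For each $v_j$, the vertex $y_j$ sees a unique color in $N_{D_G}(y_j)=\{x_i:v_i\in N_G[v_j]\}$, which is exactly a unique color in $N_G[v_j]$ under $g$. So $g$ is an $\mathcal{L}$-CFCN$^*$-coloring of $G$, and only the restriction of $f$ to $X$ is used.

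For ($\Rightarrow$), assume $G$ is $k$-CFCN$^*$-choosable and let $\mathcal{M}$ be an arbitrary $k$-assignment on $D_G$. Here the lists of $x_i$ and $y_i$ may differ, which is why the decoupling matters. Define two $k$-assignments for $G$: $\mathcal{L}^X$ with $L^X_{v_i}=M_{x_i}$, and $\mathcal{L}^Y$ with $L^Y_{v_i}=M_{y_i}$. Choose CFCN$^*$-colorings $g_X$ and $g_Y$ of $G$ for these two assignments. Then set $f(x_i)=g_X(v_i)$ and $f(y_i)=g_Y(v_i)$, leaving $x_i$ (resp.\ $y_i$) uncolored when $v_i$ is uncolored in $g_X$ (resp.\ $g_Y$). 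Every $y_j$ has its open neighborhood inside $X$, where the colors are those of $g_X$ on $N_G[v_j]$, so $y_j$ sees a unique color. Symmetrically, every $x_i$ sees a unique color coming from $g_Y$. The colors on one side never enter the neighborhoods that the same side must satisfy, so no conflicts arise.

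The only step needing care is this ($\Rightarrow$) direction: one must check that the two sides impose independent constraints, which is exactly the bipartiteness of $D_G$ together with the neighborhood identities above. For (ii), the same constructions apply: if $g_X$ and $g_Y$ color all vertices then so does $f$, and if $f$ colors all of $X$ then so does $g$. Hence the equivalence between $k$-CFCN-choosability of $G$ and $k$-CFON-choosability of $D_G$ follows identically.
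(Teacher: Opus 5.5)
Your proof is correct and matches the paper's argument. For ($\Rightarrow$) you split an arbitrary $k$-assignment on $D_G$ into $\mathcal{L}^X,\mathcal{L}^Y$ on $G$ and place the two resulting CFCN$^*$-colorings side by side. For ($\Leftarrow$) you copy $L_{v_i}$ to both $x_i$ and $y_i$, read off the coloring on $X$, and use $N_{D_G}(y_j)=\{x_i : v_i\in N_G[v_j]\}$; you also correctly read the edges of $D_G$ as $x_iy_j$, where the paper's definition has the typo $x_ix_j$.
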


\begin{proof}
We prove (i). The proof of (ii) is analogous. 
Let $V(G)= \{v_1, v_2, \dots, v_n\}$. Let $D_G$ be the extended double cover of $G$ as defined above.

Suppose $G$ is $k$-CFCN$^*$-choosable. We will show that $D_G$ is $k$-CFON$^*$-choosable. Let $\mathcal{L}=\{L_v~:~ v \in V(D_G)\}$ be a $k$-assignment for $D_G$. 
Let $\mathcal{L}^X = \{L_{v_i}^X ~:~ v_i \in V(G)\}$, where $L_{v_i}^X = L_{x_i}$, be a $k$-assignment for $G$. 
Let $f^X$ be any valid $\mathcal{L}^X$-CFCN$^*$ coloring of $G$.
In a similar way, let $\mathcal{L}^Y = \{L_{v_i}^Y ~:~ v_i \in V(G)\}$, where $L_{v_i}^Y = L_{y_i}$, be a $k$-assignment for $G$. Let $f^Y$ be any valid $\mathcal{L}^Y$-CFCN$^*$ coloring of $G$.
We now obtain an $\mathcal{L}$-CFON$^*$-coloring $f$ of $D_G$ as described below: $f(x_i)=f^X(v_i)$ and $f(y_i)=f^Y(v_i)$, $\forall i \in [n]$. 
We claim that $f$ is a CFON$^*$-coloring of $D_G$. Consider a vertex $x_i \in X$. Suppose $v_j$ is a uniquely colored vertex in the closed  neighborhood of $v_i$ in $G$, under the coloring $f^Y$. Then, under $f$, $y_j$ is a uniquely colored neighbor in the open neighborhood of $x_i$. 
In a similar way, one can show that every vertex in $Y$ sees a unique color in its open neighborhood under $f$.

Suppose $D_G$ is $k$-CFON$^*$-choosable.
We will show that $G$ is $k$-CFCN$^*$-choosable. Let $\mathcal{L}=\{L_v~:~ v \in V(G)\}$ be a $k$-assignment for $G$.
Let $\mathcal{L}'=\{L'_v~:~ v \in V(D_G)\}$, where $L'_{x_i} = L'_{y_i} = L_{v_i}$, $\forall i \in [n]$, be a $k$-assignment for $D_G$.
Let $f'$ be an $\mathcal{L}'$-CFON$^*$-coloring of $D_G$. We now obtain an $\mathcal{L}$-CFCN$^*$-coloring $f$ of $G$ as described below:
$f(v_i)=f'(x_i)$, $\forall i \in [n]$.  Consider a vertex $v_i \in V(G)$. Let $x_j$ be a uniquely colored vertex in the open  neighborhood of $y_i$ under $f'$. Then $v_j$ is a uniquely colored neighbor in the closed neighborhood of $v_i$ in $G$, under $f$. 
\end{proof}

Combining Lemma \ref{lem 2 cfon* choosability} and Theorem \ref{thm 2 cfcn* chhosability}, we have the following result.

\begin{theorem}
\label{thm 2 cfon* choosability}
The \textsc{$2$-CFON$^*$-choosability problem} is NP-hard on bipartite graphs.
\end{theorem}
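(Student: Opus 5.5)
The plan is to chain two earlier results: Theorem \ref{thm 2 cfcn* chhosability} (NP-hardness of \textsc{$2$-CFCN$^*$-choosability}) and Lemma \ref{lem 2 cfon* choosability}(i) with $k=2$.

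\textbf{Step 1: the reduction.} Given an instance $G$ of the \textsc{$2$-CFCN$^*$-choosability problem}, output its extended double cover $D_G$. Here $x_i$ is adjacent to $y_j$ exactly when $j=i$ or $v_iv_j\in E(G)$; this is the intended reading of the definition, whose ``$x_ix_j$'' is a typo for $x_iy_j$.

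\textbf{Step 2: the construction is polynomial.} $D_G$ has $2n$ vertices and $n+2|E(G)|$ edges, so it is built in polynomial time.

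\textbf{Step 3: the instance is valid for the target problem.} $D_G$ is bipartite with bipartition $\{X,Y\}$. Each $x_i$ is adjacent to $y_i$, so $D_G$ has no isolated vertices. This matters because CFON$^*$ is only considered for graphs without isolated vertices.

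\textbf{Step 4: correctness.} By Lemma \ref{lem 2 cfon* choosability}(i) with $k=2$, $G$ is $2$-CFCN$^*$-choosable if and only if $D_G$ is $2$-CFON$^*$-choosable. So the map $G\mapsto D_G$ is a polynomial-time many-one reduction. Combined with Theorem \ref{thm 2 cfcn* chhosability}, this shows the \textsc{$2$-CFON$^*$-choosability problem} is NP-hard even when restricted to bipartite inputs.

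\textbf{Main obstacle.} The only real content is Lemma \ref{lem 2 cfon* choosability}(i), and I would recheck its correspondence. In $D_G$ the open neighbourhood of $x_i$ is $\{y_j : v_j\in N_G[v_i]\}$, and symmetrically for $y_i$. Coloring the $Y$ side by an $\mathcal{L}^Y$-CFCN$^*$-coloring of $G$ therefore serves all $X$ vertices, and vice versa. For the converse, give $x_i$ and $y_i$ the same list $L_{v_i}$, take a CFON$^*$-coloring $f'$ of $D_G$, and read off $f(v_i)=f'(x_i)$. The vertex $v_i$ is then served by the open neighbourhood of $y_i$, which is exactly $\{x_j : v_j\in N_G[v_i]\}$. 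Once this holds, the theorem follows immediately.
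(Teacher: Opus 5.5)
Your proposal is correct and follows the paper's argument: the paper also proves this theorem by composing the NP-hardness of \textsc{$2$-CFCN$^*$-choosability} (Theorem~\ref{thm 2 cfcn* chhosability}) with the extended double cover reduction $G \mapsto D_G$ of Lemma~\ref{lem 2 cfon* choosability}(i) at $k=2$. Your additional checks are all sound details that the paper leaves implicit: reading the edge rule as $x_iy_j$, polynomial size, bipartiteness, absence of isolated vertices via the edges $x_iy_i$, and the neighbourhood correspondence behind the lemma.
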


\bibliographystyle{plain}
\bibliography{Thesis_reference}
\end{document}